\newtheorem{precor}{{\bf Corollary}}
\newenvironment{cor}{\begin{precor}{\hspace{-0.5
               em}{\bf.\ }}}{\end{precor}}
\newtheorem{precon}{{\bf Conjecture}}
\newtheorem{predefin}{{\bf Definition}}
\newenvironment{defin}[1]{\begin{predefin}{\hspace{-0.5
                   em}{\bf.\ }}{\rm
#1}\hfill{$\spadesuit$}}{\end{predefin}}
\newtheorem{preexm}{{\bf Example}}
\newtheorem{prerem}{{\bf Remark}}
\newtheorem{preappl}{{\bf Application}}
\newtheorem{prelem}{{\bf Lemma}}
\newenvironment{lem}{\begin{prelem}{\hspace{-0.5
               em}{\bf.\ }}}{\end{prelem}}
\newtheorem{preproof}{{\bf Proof.\ }}
\newenvironment{proof}[1]{\begin{preproof}{\rm
               #1}\hfill{$\blacksquare$}}{\end{preproof}}
\newtheorem{presproof}{{\bf Sketch of Proof.\ }}
\newtheorem{prethm}{{\bf Theorem}}
\newenvironment{thm}{\begin{prethm}{\hspace{-0.5
               em}{\bf.\ }}}{\end{prethm}}
\newtheorem{prealphthm}{{\bf Theorem}}
\newenvironment{alphthm}{\begin{prealphthm}{\hspace{-0.5
               em}{\bf.\ }}}{\end{prealphthm}}
\newtheorem{prepro}{{\bf Proposition}}
\newenvironment{pro}{\begin{prepro}{\hspace{-0.5
               em}{\bf.\ }}}{\end{prepro}}
\newtheorem{preprb}{{\bf Problem}}
\def\conct[#1,#2]{\mbox {${#1} \leftrightarrow {#2}$}}
\def\dconct[#1,#2]{\mbox {${#1} \rightarrow {#2}$}}
\def\deg[#1,#2]{\mbox {$d_{_{#1}}(#2)$}}
\def\mindeg[#1]{\mbox {$\delta_{_{#1}}$}}
\def\maxdeg[#1]{\mbox {$\Delta_{_{#1}}$}}
\def\outdeg[#1,#2]{\mbox {$d_{_{#1}}^{^+}(#2)$}}
\def\minoutdeg[#1]{\mbox {$\delta_{_{#1}}^{^+}$}}
\def\maxoutdeg[#1]{\mbox {$\Delta_{_{#1}}^{^+}$}}
\def\indeg[#1,#2]{\mbox {$d_{_{#1}}^{^-}(#2)$}}
\def\minindeg[#1]{\mbox {$\delta_{_{#1}}^{^-}$}}
\def\maxindeg[#1]{\mbox {$\Delta_{_{#1}}^{^-}$}}
\def\dre[#1,#2,#3]{\mbox {${\cal E}_{_{#3}}(#1,#2)$}}
\def\pdre[#1,#2,#3]{\mbox {${\cal P}_{_{#3}}(#1,#2)$}}
\def\var[#1,#2]{\mbox {${\rm Var}_{_{#1}}(#2)$}}
\def\ls[#1]{\mbox {$\xi^{^{#1}}$}}
\def\hom[#1,#2]{\mbox {${\rm Hom}({#1},{#2})$}}
\def\onvhom[#1,#2]{\mbox {${\rm Hom^{v}}(#1,#2)$}}
\def\onehom[#1,#2]{\mbox {${\rm Hom^{e}}(#1,#2)$}}
\def\core[#1]{\mbox {$#1^{^{\bullet}}$}}
\def\cay[#1,#2]{\mbox {${\rm Cay}({#1},{#2})$}}
\def\cays[#1,#2]{\mbox {${\rm Cay_{s}}({#1},{#2})$}}
\def\dirc[#1]{\mbox {$\stackrel{\rightarrow}{C}_{_{#1}}$}}
\def\cycl[#1]{\mbox {${\bf Z}_{_{#1}}$}}
\begin{document}
\begin{center}
{\Large \bf A Generalization of Cover Free Families}\\
\vspace*{0.5cm}
{\bf Mehdi Azadi motlagh $^\ast$ and Farokhlagha Moazami$^\dag$}\\
{\it $^\ast$Department of Mathematics}\\
{\it Kharazmi University, 50 Taleghani Avenue, 15618, Tehran, Iran}\\
{\tt std$\_$m.azadim@khu.ac.ir}\\
{\it $^\dag$ Cyberspace Research Center} \\
{\it Shahid Beheshti University, G.C.}\\
{\it P.O. Box {\rm 1983963113}, Tehran, Iran}\\
{\tt f$\_$moazemi@sbu.ac.ir}\\ \ \\
\end{center}
\begin{abstract}
An $(r,w; d)$ -cover-free family (CFF) is a family of subsets of a finite set such that the intersection of any $r$ members of the family contains at least $d$ elements that are not in the union of any other $w$ members. The minimum number of elements for which there exists an $(r,w; d)$-CFF with $t$ blocks is denoted by 
$N((r,w;d), t)$. In this paper, we determine the exact value of $N((r,w;d), t)$  for some special parameters. Also, we present two constructions for $(2,1;d) $-CFF and  $(2,2;d) $-CFF which improve the existing constructions. Moreover, we introduce a generalization of cover-free families which is motivated by an application of CFF in the key pre-distribution schemes. Also, we investigate some properties and bounds on the parameters of this generalization.

\begin{itemize}
\item[]{{\footnotesize {\bf Key words:}\ Biclique covering number, Cover-free families, Key pre-distribution.}}
\item[]{ {\footnotesize {\bf Subject classification:} 05B40.}}
\end{itemize}
\end{abstract}
\section{Introduction}
A {\it key pre-distribution scheme} (KPS) is a method by which a trusted authority (TA) distributes secret information among a set of users in such a way that every user in a group in some specified family of privileged subsets is able to compute a common key associated with that group. This common key must remain unknown to some specified coalitions of users (forbidden subsets) outside the privileged group. To construct a key pre-distribution scheme, TA can  
 use a $\{0,1\}$-matrix $M$ that is called {\it key distribution pattern}. A key distribution pattern specifies which users are to receive which keys. Namely, user $u_i$ is given the key $k_j$ if and only if $M[i,j]=1$. Mitchell and Piper considered a key distribution pattern in which there is a key for every group of $r$ users, such that this key is secure against any disjoint coalition of at most $w$ users. A family of sets is called an $(r,w)$-{\it cover-free family} if no intersection of $r$ sets of the family are covered by a union of
any other $w$ sets of the family. Easily one can see that Mitchell-Piper key distribution patterns are equivalent to $(r,w)$-cover-free families.
Cover-free families were first introduced in $1964$ by Kautz and Singleton in the context of superimposed binary codes ~\cite{1053689}. Cover-free families (CFFs) were considered from different subjects such as combinatorics, information theory and group testing by many researchers (see, for example, ~\cite{erdos1,rcff2,haji2, Hajiabolhassan20123626,JCD:JCD10056,rcff3,Stinson2004463,wei}). Stinson  and Wei~\cite{Stinson2004463} have introduced a generalization of cover-free families as follows. 
\begin{defin}{ Let $d, n, t, r,$ and $ w $ be positive integers  and $ B = \{ B_1, \ldots, B_t \}$ be a collection of subsets of a set $X$, where
$|X| = n$. Each element of the collection $B$ is called a block and the elements of $X$ are called points. The pair $(X,B)$ is called an $ (r,w; d)-CFF(n, t) $ if for any two sets
of indices $ L,M \subseteq [t] $ such that $ L \cap M = \emptyset , |L| = r, $ and $ |M| = w, $ we have
$$ \vert (\bigcap_{l \in L}^{} B_{l}) \setminus (\bigcup_{m \in M}^{} B_{m}) \vert \geq d .$$
Let $N((r,w;d),t)$ denote the minimum number of points of $X$ in an  $(r,w;d)-CFF$ having $t$ blocks and  $T((r,w;d),n)$ denote the maximum number of blocks in an $(r,w;d)-CFF$ with $n$ points.
} 
\end{defin}
An $(r,w;d)$-cover-free family yields  a key pre-distribution scheme  in which  every group of $r$ users have at least $d$ common keys that these keys are secure against any disjoint coalition of at most $w$ users. The case $r= 2$ is of particular interest, because this is the case where keys are associated with pairs of users. Also, in the most applications, we do~not need every $2$-subset of users have a common key. In other word, for every key pre-distribution scheme that privileged subsets are $2$-subsets, we can assign a graph as follows. Let the vertices of this graph be the users or nodes of the network and two vertices are adjacent if and only if these users or nodes can establish a common key. We name this graph the scheme graph. For instance, the scheme graph of key pre-distribution scheme constructed from a $(2,w;d)$-cover free family is a complete graph.
The definition of scheme graph leads us to the following  generalization of cover-free families.
\begin{defin}{ Let $d, n, t,$ and $w$ be positive integers and $ B = \{ B_1, \ldots, B_t \}$ be a collection of subsets of a set $X$, where
$|X| = n$. Assume that  $G$ is a graph with $V(G)=[t]$. 
 The pair $(X,B)$, is called a $ (G,w; d)-CFF(n, t) $ if for any two sets
of indices $ L,M \subseteq [t] $ such that $ L \cap M = \emptyset , |L| = 2, $ and $L$ is an edge of $G$ and $ |M| = w, $ we have
$$ \vert (\bigcap_{l \in L}^{} B_{l}) \setminus (\bigcup_{m \in M}^{} B_{m}) \vert \geq d .$$
}
\end{defin}
This definition has a dual form as follows.
\begin{defin} { Let $G$ be a graph and ${\cal A}= \{A_1,A_2,\ldots, A_n\}$ be a collection of subsets of $V(G)$. The  collection ${\cal A}$ is called a $(w,d)$-{\it covering} of $G$ if for every edge $\{u,v \}$ of the graph $G$ and every $w$-subset, $W \subseteq V(G)$, disjoint from $\{u,v\}$, there exist at least  $d$ sets $A_{j_1},\ldots, A_{j_d}\in {\cal A}$ such that  $\{u,v\} \subseteq A_{j_i}$ and $ W \cap A_{j_i} = \varnothing$, for any $1\leq i\leq d$. The number of sets in the minimum covering of $G$ is called the {\it key  pool number} of $G$ and is denoted by $N(G,w;d)$. This parameter is denoted by $N(G)$  whenever  $w=d=1$.}
\end{defin}
This generalization is a natural generalization for the set systems. For instance, Bollob{\'a}s and Scott~\cite{bllobas1} consider a generalization like this for separating systems.
In the following, we give a brief outline of graph theory which we need it. Throughout this paper, we only consider finite simple graphs. For a graph $ G $, let $V(G)$ and $E(G)$ denote its vertex and edge sets, respectively.  In this paper, by $[n]$, we shall mean the set $\{1,2, \ldots, n\}$. The {\it biclique covering number} $bc(G)$  of a graph $G$ is the smallest number of bicliques (complete bipartite subgraphs) of $G$ such that every edge of $G$ belongs to at least one of these bicliques. In the same manner, we can define $d$-{\it biclique covering number}  $bc_{d}(G)$ of a graph  $G$ which is the smallest number of bicliques of $G$ such that every edge of $G$ belongs to at least $d$ of these bicliques. In these  cases, when the bicliques are required to be edge-disjoint, the corresponding measures are known as the {\it biclique partition number} and $d$-{\it biclique partition number}   and are denoted by $bp(G)$ and $bp_{d}(G)$, respectively. Hajiabolhassan and Moazami ~\cite{Hajiabolhassan20123626} showed that the existence of an $(r,w;d)$-cover-free family results from the existence of $d$-biclique cover of bi-intersection graph  and vice versa. The {\it bi-intersection graph} $ I_{t}(r, w) $ is  a bipartite graph whose vertices are all $w$- and $r$-subsets of a  $t$-element set, where a $w$-subset is adjacent to an $r$-subset if and only if their intersection is empty. They \cite{haji2} also showed  the existence of a secure frame proof code results from the existence of biclique cover of Kneser graph  and vice versa. The {\it Kneser graph} $KG(t, r)$ is a graph whose vertices are all $r$-subsets of a $t$-element set, where two $r$-subsets are adjacent if and only if their intersection is empty. Motivated by these observations, we determine the exact value of $d$-biclique covering number of bi-intersection graph and Kneser graph for some special parameters in Section~\ref{bc and bp section}.  For many  applications of cover-free families, construction of cover-free families has been studied in the literature~\cite{li,frame,stinson1998}. In Section~\ref{bc and bp section}, we  also give a construction for $(2,1;d)$-cover-free family and a construction for $(2,2;d)$-cover-free family which improve the existing constructions. Moreover, in Section~\ref{generalization} we investigate some properties of $(w,d)$-covering of graphs and determine a relationship between this parameter and the biclique covering number of bipartite graphs.
\section{Cover Free-Family}{} \label{bc and bp section}
Determining the exact value of the parameter $N((r;w; d); t)$ and the biclique covering number of the Kneser graphs, even for special $r$, $w$, $d$, and $t$, is an interesting
and challenging problem. This problem has been studied in the literature; see~\cite{haji2,Hajiabolhassan20123626,kim2,kim,li}.
In this section, we determine the exact value of $N((r;w; d); t)$ for some special values of $r$, $w$, $d$, and $t$. Also, 
we determine the exact value of the biclique covering number of some Kneser graphs. To do this, we present a preliminary lemma as follows.
\begin{lem} \label{big bc} Let $k$ and $t$ be positive integers, where $2\leq k\leq t$. Then
$$B(KG(2t,k))= B(I_{2t}(k,k)) ={t\choose k}^{2},$$
where $B(G) $ is the maximum number of edges among the bicliques of $G$.
\end{lem}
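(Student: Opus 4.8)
The plan is to reduce the computation of $B(G)$, for both $G = KG(2t,k)$ and $G = I_{2t}(k,k)$, to a single extremal problem about families of $k$-subsets. A biclique in either graph is given by two disjoint vertex parts $S$ and $T$ together with all edges between them, and in both graphs adjacency means \emph{disjointness} of the corresponding $k$-subsets. Hence a biclique is exactly a pair of families $S,T$ of $k$-subsets of $[2t]$ such that $A\cap B=\emptyset$ for every $A\in S$ and every $B\in T$, and its number of edges is $|S|\,|T|$. In $KG(2t,k)$ one additionally needs $S\cap T=\emptyset$ as families, but I will show this holds automatically at the optimum; and in the bipartite graph $I_{2t}(k,k)$ every biclique already has its two parts on opposite sides. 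Thus both quantities equal the maximum of $|S|\,|T|$ over cross-disjoint pairs, which yields the asserted equality $B(KG(2t,k))=B(I_{2t}(k,k))$ as soon as the common value is determined.

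For the upper bound I would, given such a pair, set $U=\bigcup_{A\in S}A$ and $W=\bigcup_{B\in T}B$. Cross-disjointness forces $U\cap W=\emptyset$, since a common point would lie in some $A\in S$ and some $B\in T$, contradicting $A\cap B=\emptyset$; hence $|U|+|W|\le 2t$. As every member of $S$ is a $k$-subset of $U$ and every member of $T$ a $k$-subset of $W$, we get $|S|\le\binom{|U|}{k}$ and $|T|\le\binom{|W|}{k}$, so
\[
|S|\,|T|\;\le\;\binom{a}{k}\binom{b}{k},\qquad a:=|U|,\ b:=|W|,\ a+b\le 2t .
\]
It then remains to maximize $\binom{a}{k}\binom{b}{k}$ over integers $a,b$ with $a+b\le 2t$.

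The core of the proof is this optimization, which I expect to be the main (though elementary) obstacle. Since $\binom{\cdot}{k}$ is nondecreasing, one may take $a+b=2t$ and write $b=2t-a$; the claim is that $\binom{a}{k}\binom{2t-a}{k}$ is maximized at $a=t$. I would establish this by a discrete monotonicity argument. Using $\binom{a+1}{k}/\binom{a}{k}=(a+1)/(a+1-k)$, a short computation gives, for $k\le a<t$,
\[
\frac{\binom{a+1}{k}\binom{2t-a-1}{k}}{\binom{a}{k}\binom{2t-a}{k}}-1
\;=\;\frac{k\,(2t-2a-1)}{(a+1-k)(2t-a)}\;>\;0 ,
\]
so the product strictly increases as $a$ runs from $k$ up to $t$ and, by the symmetry $a\leftrightarrow 2t-a$, strictly decreases afterwards. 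Therefore the maximum is $\binom{t}{k}^{2}$, giving $B(G)\le\binom{t}{k}^{2}$ in both cases.

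Finally, for the matching lower bound I would split $[2t]$ into two $t$-element sets $P$ and $Q$, and take $S$ to be all $k$-subsets of $P$ and $T$ all $k$-subsets of $Q$. Then every member of $S$ is disjoint from every member of $T$; moreover $S$ and $T$ are themselves disjoint families (a $k$-subset of $P$ cannot equal one of $Q$), so this is a legitimate biclique of $KG(2t,k)$ as well as of $I_{2t}(k,k)$; and its number of edges is $|S|\,|T|=\binom{t}{k}^{2}$. Combining this with the upper bound completes the proof.
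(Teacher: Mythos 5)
Your proof is correct and follows essentially the same route as the paper: both reduce $B$ of each graph to maximizing $\binom{a}{k}\binom{2t-a}{k}$ over a split of the ground set and then establish discrete monotonicity via the ratio of consecutive terms, concluding the maximum is attained at $a=t$. The only difference is that you carefully justify the reduction step (disjointness of the unions $U,W$ and the automatic disjointness of the parts in $KG(2t,k)$), which the paper dismisses with ``one can check.''
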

\begin{proof}{One can check that the maximum number of edges among the bicliques of each of these graphs is 
$$ \mathop{\max}_{k \leq i \leq t} [{  i \choose  k } { 2t-i \choose k }]. $$
Let $ f(i)={i \choose k} {2t-i \choose k}$, where $k \leq i\leq t$. Since $\frac{f(i-1)}{f(i)} =\frac{(i-k)(2t-i+1)}{i(2t-i-k+1)} $, it is easy to check that  $f$ is an increasing function. Hence,
$$ B(KG(2t,k)) = B(I_{2t}(k,k)) = {  t \choose k }^{2},$$
as desired.}
\end{proof}
\begin{thm}\label{bi intersect} If $ 1 \leq k \leq t$ and $ d={ 2t-2k \choose t-k}$, then
 $$N((k,k;d),2t)=bc_{d}(I_{2t}(k,k))=bp_{d}(I_{2t}(k,k))={ 2t \choose t}.$$
\end{thm}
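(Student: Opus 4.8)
The plan is to establish the chain $\binom{2t}{t} \le bc_d(I_{2t}(k,k)) \le bp_d(I_{2t}(k,k)) \le \binom{2t}{t}$, which forces all three quantities to coincide, and then to invoke the equivalence of Hajiabolhassan and Moazami~\cite{Hajiabolhassan20123626} to identify $N((k,k;d),2t)$ with $bc_d(I_{2t}(k,k))$. The middle inequality $bc_d \le bp_d$ is automatic, since every $d$-fold biclique partition is in particular a $d$-fold biclique cover. Thus the work splits into one lower bound (for $bc_d$) and one explicit construction (for $bp_d$); the value $d = \binom{2t-2k}{t-k}$ is precisely what makes the two bounds coincide.

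For the lower bound I would use a global edge-counting argument together with Lemma~\ref{big bc}. First I would count the edges of $I_{2t}(k,k)$: a $k$-subset is adjacent to every disjoint $k$-subset, so $|E(I_{2t}(k,k))| = \binom{2t}{k}\binom{2t-k}{k}$. In any family realizing $bc_d$, every edge is covered at least $d$ times, so the total number of edge-incidences is at least $d\,|E|$; since each biclique contributes at most $B(I_{2t}(k,k)) = \binom{t}{k}^2$ edges by Lemma~\ref{big bc}, the number of bicliques is at least $d\,|E|/\binom{t}{k}^2$. Substituting $d = \binom{2t-2k}{t-k}$ and simplifying the factorials, this quantity collapses to $\binom{2t}{t}$, giving $bc_d(I_{2t}(k,k)) \ge \binom{2t}{t}$.

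For the matching upper bound I would exhibit an explicit $d$-fold biclique partition with exactly $\binom{2t}{t}$ bicliques, one for each $t$-subset $S$ of the ground set $[2t]$. Given such an $S$, let $\mathcal{B}_S$ be the complete bipartite subgraph whose $r$-side consists of all $k$-subsets contained in $S$ and whose $w$-side consists of all $k$-subsets contained in $[2t]\setminus S$. Any such pair is disjoint, so $\mathcal{B}_S$ is indeed a biclique, and it is a maximum one with $\binom{t}{k}^2$ edges. The crux is the counting identity: a fixed edge $\{A,B\}$ (with $A,B$ disjoint $k$-subsets) lies in $\mathcal{B}_S$ precisely when $A \subseteq S$ and $S \cap B = \emptyset$, and the number of $t$-subsets $S$ satisfying both is the number of ways to choose the remaining $t-k$ elements of $S$ from the $2t-2k$ points outside $A \cup B$, namely $\binom{2t-2k}{t-k} = d$. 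Hence every edge is covered exactly $d$ times, so this family is a valid $d$-fold biclique partition and $bp_d(I_{2t}(k,k)) \le \binom{2t}{t}$.

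Combining the three inequalities yields $bc_d = bp_d = \binom{2t}{t}$, and the Hajiabolhassan--Moazami equivalence gives $N((k,k;d),2t) = bc_d(I_{2t}(k,k))$, completing the proof. I expect the main obstacle to be conceptual rather than computational: one must recognize that the parameter $d$ is engineered so that the ``each edge covered exactly $d$ times'' property of the natural $t$-subset bicliques matches the global counting lower bound, making the cover simultaneously optimal and a genuine partition. The remaining factorial simplifications in the lower bound are routine.
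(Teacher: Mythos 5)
Your proposal is correct and takes essentially the same route as the paper: the identical family of bicliques indexed by the $t$-subsets of $[2t]$ (with $k$-subsets of $S$ on one side and of $[2t]\setminus S$ on the other) for the upper bound, the same counting bound $bc_d \geq d|E|/B$ combined with Lemma~\ref{big bc} for the lower bound, and the same appeal to the Hajiabolhassan--Moazami equivalence to identify $N((k,k;d),2t)$ with $bc_d(I_{2t}(k,k))$. The only cosmetic difference is that you verify the ``exactly $d$'' property by directly counting the $\binom{2t-2k}{t-k}$ sets $S$ containing a fixed edge, whereas the paper first argues ``at least $d$'' and then upgrades to ``exactly $d$'' via a global edge-sum; your version is slightly more direct.
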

\begin{proof}{Define $t'$ to be $ { 2t \choose t } $. Now, we show that $ I_{2t}(k,k) $ can be covered by $ t' $ bicliques such that every edge of $ I_{2t}(k,k) $ is covered by exactly $d$ bicliques. Denote the vertex set of $ I_{2t}(k,k) $ by bipartition $ (X,Y ) $ 
in which $X$ and $Y$ are the collections of all $t$-subsets of the set $[2t]$.
Suppose $ A_{j} $ is a $ t $-subset of $[2t] $ and $ A_{j}^{c} $ is the complement of the $ A_{j} $ in  $ [2t] $. 
Denote the number of  these pairs by $ t' $. Now, for every $ 1 \leq j \leq  t'  $, construct the biclique $ G_{j} $ with vertex set 
$ (X_{j} , Y_{j}) $, where $X_{j}$ is all $k$-subsets of $A_{j}$ and $Y_{j}$ is all $k$-subsets of $A_{j}^{c}$.
Let $UV$ be an arbitrary edge of $ I_{2t}(k,k)$.
In view of the definition of  $G_j$, $UV$ is covered by every  $G_{j}$ with vertex set $(X_{j} , Y_{j})$, where $U$ is a vertex of $X_{j}$  and $V$ is a vertex  of $Y_{j}$ or vice versa. Thus every edge of $ I_{2t}(k,k)$ is covered by at least $ d $ bicliques. One can see that 
$$\sum_{j=1}^{t'}|E(G_{j})| = {2t \choose t}{ t \choose k }^2 \quad \quad \& \quad \quad \vert E( I_{2t}(k,k)) \vert = {2t \choose k}{ 2t-k \choose k}. $$
Now, it is simple to check that
$$\sum_{j=1}^{t'}|E(G_{j})| = d \vert E( I_{2t}(k,k)) \vert.$$
Thus every edge of $I_{2t}(k,k)$ is covered by exactly $ d $ bicliques. Note that we have actually proved that 
\begin{equation}\label{eq-bi intersect1}
bp_{d}(I_{2t}(k,k)) \leq t'.
\end{equation}
Conversely, one can see that 
$$bp_{d}(I_{2t}(k,k)) \geq bc_{d}(I_{2t}(k,k)) \geq \frac{d|E(I_{2t}(k,k))|}{B(I_{2t}(k,k))}.$$
Also, by Lemma ~\ref{big bc}, we have
$$\frac{d|E(I_{2t}(k,k))|}{B(I_{2t}(k,k))}=\frac{{ 2t-2k\choose t-k }{ 2t \choose k }{2t-k\choose k }}{{ t\choose k}^2}={ 2t\choose t }= t'. $$
Hence, 
\begin{equation}\label{eq-bi intersect2}
bp_{d}(I_{2t}(k,k)) \geq bc_{d}(I_{2t}(k,k)) \geq t'.
\end{equation}
From ~(\ref{eq-bi intersect1}) and ~(\ref{eq-bi intersect2}) we conclude
$$bp_{d}(I_{2t}(k,k)) = bc_{d}(I_{2t}(k,k)) = t',$$
which completes the proof. 
}
\end{proof}
In view of the proof of the previous theorem, by a slight modification, one can obtain the following result.
\begin{thm}\label{bc of Kneser}
If $1\leq k\leq t$ and $ d={2t-2k\choose t-k }$, then
 $$bc_d(KG(2t,k))=bp_d(KG(2t,k))=\frac{{2t\choose t}}{2}.$$
\end{thm}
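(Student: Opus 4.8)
The plan is to mimic the construction in the proof of Theorem~\ref{bi intersect}, folding the bipartite construction onto the single vertex class of the Kneser graph. Recall that $KG(2t,k)$ has one vertex for each $k$-subset of $[2t]$, two of them being adjacent exactly when they are disjoint; thus it is the ``folded'' version of $I_{2t}(k,k)$ and satisfies $|E(KG(2t,k))| = \tfrac12\binom{2t}{k}\binom{2t-k}{k}$, half the edge count of $I_{2t}(k,k)$.

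First I would build the bicliques. For each $t$-subset $A$ of $[2t]$, the family $\binom{A}{k}$ of all $k$-subsets of $A$ together with the family $\binom{A^c}{k}$ induces a biclique in $KG(2t,k)$, since every $k$-subset of $A$ is disjoint from every $k$-subset of $A^c$. In contrast to the bipartite case, the pair $A$ and $A^c$ now determines the \emph{same} biclique (there is no ordered bipartition to distinguish them), so there are exactly $\binom{2t}{t}/2$ of these bicliques, one per complementary pair $\{A,A^c\}$, and each has $\binom{t}{k}^2$ edges.

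The main step, and the one place where the Kneser graph genuinely differs from $I_{2t}(k,k)$, is to count how many of these bicliques cover a fixed edge $\{U,V\}$ with $U\cap V=\emptyset$. Such a biclique $\{A,A^c\}$ covers $\{U,V\}$ precisely when the partition $\{A,A^c\}$ separates $U$ from $V$, i.e.\ when $U\subseteq A,\ V\subseteq A^c$ or $U\subseteq A^c,\ V\subseteq A$. The ordered count of $t$-subsets realizing a given one of these two inclusions is $\binom{2t-2k}{t-k}=d$ (choose the remaining $t-k$ elements from the $2t-2k$ points outside $U\cup V$), giving $2d$ ordered subsets in total; since the complement map $A\mapsto A^c$ is a fixed-point-free involution carrying one orientation bijectively onto the other, these $2d$ ordered subsets collapse to exactly $d$ unordered complementary pairs. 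Hence every edge is covered \emph{at least} $d$ times. To upgrade this to \emph{exactly} $d$, I would compare totals: $\sum_j|E(G_j)| = \tfrac12\binom{2t}{t}\binom{t}{k}^2$, while the identity $\binom{2t-2k}{t-k}\binom{2t}{k}\binom{2t-k}{k}=\binom{2t}{t}\binom{t}{k}^2$ already used in Theorem~\ref{bi intersect} shows that this equals $d\,|E(KG(2t,k))|$. Therefore each edge lies in exactly $d$ bicliques, which gives $bp_d(KG(2t,k))\le \binom{2t}{t}/2$.

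Finally, for the lower bound I would reuse the averaging argument: $bp_d(KG(2t,k))\ge bc_d(KG(2t,k))\ge d\,|E(KG(2t,k))|/B(KG(2t,k))$, and substitute $B(KG(2t,k))=\binom{t}{k}^2$ from Lemma~\ref{big bc}. The resulting quotient simplifies, via the same identity, to $\binom{2t}{t}/2$, matching the upper bound and forcing equality throughout. I expect the only delicate point to be the collapsing-by-complementation count in the third paragraph; apart from that extra factor of $\tfrac12$, the argument is the bipartite proof carried over essentially verbatim.
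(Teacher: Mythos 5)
Your proposal is correct and is essentially the paper's own argument: the paper proves this theorem by saying it follows from the proof of Theorem~\ref{bi intersect} ``by a slight modification,'' and your write-up is exactly that modification, with the key factor of $\tfrac12$ (complementary pairs $\{A,A^c\}$ yielding the same biclique, and the $2d$ ordered covering subsets collapsing to $d$ unordered ones) made explicit.
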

There are many applications for cover-free families. Consequently, the efficient construction of cover-free families is an interesting problem for researchers. Li  et al.~\cite{li} proved the following theorem.
\begin{alphthm}\label{li}{\rm \cite{li}}
If there exists a $(2,1;d)-CFF(n,t)$, then there exists a $(2,1;d)-CFF(n+(s+2)(d+1),2t)$, where $s=N((1,1),t)$.
\end{alphthm}
In the next theorem, we give a construction which  improves Theorem~\ref{li}.
\begin{thm}\label{new}If there exists a $(2,1;d)-CFF(n_1,t)$ and a $(1,1;d)-CFF(n_2,t)$, 
then there exists a $(2,1;d)-CFF(n_1+n_2+2,2t)$.
\end{thm}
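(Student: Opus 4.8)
The plan is to prove this by a \emph{doubling} construction. Denote the given $(2,1;d)$-CFF$(n_1,t)$ by $\{B_1,\dots,B_t\}$ on a point set $X_1$, the given $(1,1;d)$-CFF$(n_2,t)$ by $\{A_1,\dots,A_t\}$ on a disjoint point set $X_2$, and adjoin two fresh points $a,b$; the new ground set $X=X_1\cup X_2\cup\{a,b\}$ then has exactly $n_1+n_2+2$ points. I would index the $2t$ new blocks as $C_{i,0},C_{i,1}$ $(1\le i\le t)$ and define
\[
C_{i,0}=B_i\cup A_i\cup\{a\},\qquad C_{i,1}=B_i\cup(X_2\setminus A_i)\cup\{b\}.
\]
Both copies agree with the original $(2,1;d)$-family on $X_1$, sit \emph{complementarily} on $X_2$ through the $(1,1;d)$-family, and are separated by the markers $a,b$. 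The entire proof is then the verification that for every edge $\{C,C'\}$ and every third block $C''$ one has $|(C\cap C')\setminus C''|\ge d$.

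First I would dispose of the \emph{generic} deletions using only $X_1$. The key structural remark is that a $(2,1;d)$-CFF is automatically a $(1,1;d)$-CFF: for distinct $i,k$, choosing any third index $\ell$ gives $B_i\setminus B_k\supseteq (B_i\cap B_\ell)\setminus B_k$, which has at least $d$ points. Hence, whenever the deleted block $C''=C_{k,\varepsilon}$ has index $k$ different from both indices of the pair, its $X_1$-part is $B_k$, the pair's $X_1$-intersection is $B_i\cap B_j$ (or $B_i$ for a twin pair $\{C_{i,0},C_{i,1}\}$), and $X_1$ alone already yields $\ge d$ survivors by the $(2,1;d)$-, respectively induced $(1,1;d)$-, property. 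In this way every twin pair and every pair whose deleted block has a ``far'' index is settled on $X_1$, with $X_2$ and the markers only improving the count.

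The real content --- and the step I expect to be the main obstacle --- is the family of \emph{twin deletions}: pairs $\{C_{i,\varepsilon},C_{j,\varepsilon'}\}$ with $i\ne j$ whose deleted block is the twin $C_{i,1-\varepsilon}$ (or $C_{j,1-\varepsilon'}$) of a member. Such a twin agrees with its sibling on $X_1$, so it erases the whole $B_i\cap B_j$ contribution, and all $d$ survivors must now come from $X_2$ together with $a,b$. For the \emph{cross-copy} sub-cases the complementary placement works cleanly: e.g.\ for $\{C_{i,0},C_{j,1}\}$ deleted by $C_{i,1}$ the surviving $X_2$-set is exactly $A_i\setminus A_j$, which has $\ge d$ points by the $(1,1;d)$-property. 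The genuinely hard sub-cases are the \emph{same-copy} twin deletions, such as $\{C_{i,0},C_{j,0}\}$ deleted by $C_{i,1}$: the $X_1$-part dies, the surviving $X_2$-set collapses to the \emph{intersection} $A_i\cap A_j$, and the hypothesis bounds only the differences $A_i\setminus A_j$, not $A_i\cap A_j$. Thus the crux of the proof is to arrange the two copies on $X_2$ so that same-copy pairs, too, retain $\ge d$ controlled survivors after a twin deletion --- intuitively, so that each copy's $X_2$-blocks share a lower-bounded common part that the opposite copy avoids --- while simultaneously preserving the cross-copy differences, all within the budget of $n_2+2$ extra points. I expect essentially all the difficulty to be concentrated in reconciling these two competing demands with the help of the two markers $a,b$; once a placement meeting both is fixed, the remaining cases reduce to routine set inclusions.
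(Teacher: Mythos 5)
Your construction is, modulo language, \emph{identical} to the paper's: the paper works with $d$-biclique covers of the bi-intersection graph $I_{2t}(1,2)$, and its bicliques $H_i$, $L_i$, $H$, $K$ translate exactly into your blocks $C_{i,0}=B_i\cup A_i\cup\{a\}$ and $C_{i,1}=B_i\cup(X_2\setminus A_i)\cup\{b\}$, the two extra bicliques $H,K$ being precisely your two markers. Your verification of the generic deletions, the twin pairs, and the cross-copy twin deletions is correct and is exactly what the paper's construction delivers. But in the case you could not close --- the same-copy twin deletions --- there is no clever ``arrangement'' left to find: with this construction the surviving sets are forced, namely $(C_{i,0}\cap C_{j,0})\setminus C_{i,1}=(A_i\cap A_j)\cup\{a\}$ and $(C_{i,1}\cap C_{j,1})\setminus C_{i,0}=\bigl(X_2\setminus(A_i\cup A_j)\bigr)\cup\{b\}$. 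So the construction produces a $(2,1;d)$-CFF if and only if the input $(1,1;d)$-CFF happens to satisfy $|A_i\cap A_j|\geq d-1$ and $|X_2\setminus(A_i\cup A_j)|\geq d-1$ for all $i\neq j$, and neither condition follows from $|A_i\setminus A_j|\geq d$. Concretely, for $d\geq 2$ take the $A_i$ to be pairwise disjoint $d$-sets (a legitimate $(1,1;d)$-CFF$(td,t)$); then $(C_{1,0}\cap C_{2,0})\setminus C_{1,1}=\{a\}$ has a single element, and the output fails to be a $(2,1;d)$-CFF.

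The important point is that this is not merely a gap in your write-up: it is a gap in the paper itself. The paper's proof ends with ``easily one can check'' that $\{H_1,\ldots,H_{n_1},L_1,\ldots,L_{n_2},H,K\}$ is a $d$-biclique cover of $I_{2t}(1,2)$, but for the edges $(\{u\},\{u',v'\})$ and $(\{u'\},\{u,v\})$ --- which are exactly your same-copy twin deletions --- the only bicliques that can cover them are $H$ (respectively $K$) together with those $L_i$ whose part $Z_i$ (respectively $W_i$) contains both $u$ and $v$; with the disjoint-blocks input above, each such edge is covered only once, so the check fails for every $d\geq 2$. Thus the theorem's proof, both yours and the paper's, is valid only for $d=1$, where the marker alone supplies the single required survivor. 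For general $d$ the construction can be repaired at a small cost: first pad the $(1,1;d)$-CFF with $d-1$ points lying in every block and $d-1$ points lying in no block; this preserves all differences $A_i\setminus A_j$, makes both problematic quantities at least $d-1$, and yields a $(2,1;d)$-CFF$(n_1+n_2+2d,\,2t)$. That is weaker than the claimed $n_1+n_2+2$, but, since $N((1,1;d),t)\leq d\,N((1,1),t)$, it still improves the bound $n_1+(s+2)(d+1)$ of Theorem~A. So your instinct that all the difficulty is concentrated in the same-copy case is right; what you missed is that this difficulty is fatal to the stated construction, not a step awaiting a trick.
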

\begin{proof} { Let $\mathcal{P} =\{G_1,\ldots, G_{n_1}\}$ and $\mathcal{A}=\{K_1, \dots, K_{n_2}\}$ be a $d$-biclique cover of $I_t(1,2)$ and $I_t(1,1)$, respectively. Also, assume that $G_i$ and $K_i$ have $(X_i, Y_i)$ and $(W_i, Z_i)$ as vertex sets, respectively.  For $i=1, \ldots, n_1$,  let $E_i$ be the union of all $2$-subsets of $Y_i$.  Suppose that 
$ \{1, 2, \ldots, t, 1', 2', \ldots, t'\}$ is the ground set of the vertex set of the graph $I_{2t}(1,2)$.  In the sequel, for any $A\subseteq [t]$, $A'$ stands for the set $\{ i'  |  i \in A\}$. Let $A_i$ and $C_i$ be  the set of all  $1$-subsets of the set $X_i\cup X'_i$ and $Z_i \cup W'_i$, respectively. Also, let $B_i$ and $D_i$ be the set of all $2$-subsets 
 of the set $E_i \cup E'_i$ and $W_i \cup Z'_i$, respectively.  Let $H_i$ and $L_i$ be two bicliques in which $(A_i, B_i)$ and $(C_i, D_i)$ are the bipartition of their vertex sets,  respectively. Also, let $H$ be a biclique that has $(A, B)$ as the bipartition of its vertex set, where $A$ contains all $1$-subsets of the set $[t]$ and $B$ contains all $2$-subsets of the set $\{1', \ldots, t'\}$. Similarly, let $K$ be a biclique with $(Z, W)$ as the bipartition of its vertex set, where $Z$ contains all $1$-subsets of the set $\{1', \ldots, t'\}$ and $W$ contains all $2$-subsets of the set $[t]$. Easily one can check  that the set $\{H_1, \ldots, H_{n_1}, L_1, \dots, L_{n_2}, H, K\}$ form a $d$-biclique cover for the graph $I_{2t}(1,2)$.}
\end{proof}
The proof of the next theorem is analogous to that of Theorem~\ref{new}.
\begin{thm}If there exists a $(2,2;d)-CFF(n_1,t)$ and a $(2,1;d)-CFF(n_2,t)$, then there exists a $(2,2;d)-CFF(n_1+2n_2+2,2t)$.
\end{thm}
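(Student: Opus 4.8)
The plan is to argue on the biclique side and translate back at the end. By the Hajiabolhassan--Moazami correspondence a $(2,2;d)$-CFF$(n,t)$ is the same object as a $d$-biclique cover of $I_t(2,2)$ with $n$ bicliques, and a $(2,1;d)$-CFF$(n,t)$ the same as a $d$-biclique cover of $I_t(2,1)=I_t(1,2)$ with $n$ bicliques; so it suffices to build a $d$-biclique cover of $I_{2t}(2,2)$ using $n_1+2n_2+2$ bicliques out of a given $d$-biclique cover $\mathcal{P}=\{G_1,\dots,G_{n_1}\}$ of $I_t(2,2)$ and a given $d$-biclique cover $\mathcal{A}=\{K_1,\dots,K_{n_2}\}$ of $I_t(2,1)$. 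Exactly as in the proof of Theorem~\ref{new} I would use the doubled ground set $\{1,\dots,t,1',\dots,t'\}$, write $A'=\{i':i\in A\}$ for $A\subseteq[t]$, and record for each biclique its two shores together with the supports $\bigcup(\text{shore})$ of the $2$-subsets lying on them.

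The bicliques are then produced in three families, mirroring the $H_i,L_i,H,K$ of Theorem~\ref{new}. First, from each $G_i=(X_i,Y_i)$ one ``parallel'' biclique $H_i$, obtained by replacing each shore by all $2$-subsets of its support together with the primed copy of that support; these $n_1$ bicliques are meant to absorb every edge whose two $2$-subsets have disjoint underlying supports, invoking the defining $(2,2)$-inequality of $\mathcal{P}$. Secondly, from each $K_i=(W_i,Z_i)$ two ``crossed'' bicliques $L_i,\widetilde{L}_i$, built by pairing the unprimed copy of one shore with the primed copy of the other and then the reverse, just as $(C_i,D_i)$ was built in Theorem~\ref{new} but now in both prime-orientations; these $2n_2$ bicliques are meant to absorb the edges whose two $2$-subsets meet a common coordinate on opposite sides of the prime/unprime divide, reducing the requirement to the $(2,1)$-inequality of $\mathcal{A}$. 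Finally the two universal bicliques $H$ (all $2$-subsets of $[t]$ against all $2$-subsets of $[t]'$) and $K$ (its mirror, with the two shores interchanged), for the purely cross-world edges. Checking that each listed graph really is a biclique of $I_{2t}(2,2)$ is routine and follows, as in Theorem~\ref{new}, from the disjointness relations inherited from $G_i,K_i$ together with the automatic disjointness of unprimed and primed coordinates.

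The work, and the step I expect to be the main obstacle, is verifying that every edge of $I_{2t}(2,2)$ lies in at least $d$ of these bicliques. I would classify an edge, that is, a disjoint pair $\{U,V\}$ of $2$-subsets of the doubled ground set, by its prime pattern: how many coordinates each of $U,V$ uses in $[t]$ versus $[t]'$, and in particular how many twin-pairs $\{j,j'\}$ are split between $U$ and $V$ or sit inside one of them. When the two supports are disjoint the parallel family $H_i$ supplies the $d$ copies; when exactly one coordinate is shared across the divide, the correct orientation among $L_i,\widetilde{L}_i$ turns the condition into a genuine $(2,1)$-pattern (``two coordinates in, one out''), so $\mathcal{A}$ supplies the $d$ copies; the fully cross patterns fall to $H$ and $K$. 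The delicate classes are the mixed/overlap ones, where $U$ and $V$ lie in opposite worlds but their supports overlap, or where one part is a twin $\{j,j'\}$: there the parallel lift is blocked by the very coordinate it shares, while a single crossed lift tends to degenerate into an ``all in'' or ``all out'' condition. Arranging, through the second copy of $\mathcal{A}$ and the deliberate choice of which shore is primed, that the multiplicities in precisely these classes still add up to $d$ is the crux of the argument; this bookkeeping is strictly heavier than in Theorem~\ref{new}, because here both shores of $I_{2t}(2,2)$ carry $2$-subsets rather than a $2$-subset against a single point.
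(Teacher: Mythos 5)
You have reproduced exactly the construction the paper has in mind (its own ``proof'' of this theorem is the single sentence that the argument is analogous to Theorem~\ref{new}), and your inventory --- $n_1$ parallel lifts $H_i$, $2n_2$ crossed lifts $L_i,\widetilde{L}_i$, and the two universal bicliques $H,K$ --- gives the right count $n_1+2n_2+2$. But what you have written is not a proof: the covering verification, which you yourself call the crux, is never carried out, and this is a genuine gap rather than deferred bookkeeping, because for $d\ge 2$ the verification actually fails on the very class you flag as delicate. Take the edge $U=\{a,b\}$, $V=\{a',e'\}$ with $a,b,e$ distinct. No $H_i$ can cover it, since that would force $a$ into both (disjoint) shore supports of $G_i$; $K$ does not cover it; $H$ covers it exactly once. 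A crossed lift of the kind you describe covers it only if the whole triple $\{a,b,e\}$ lies inside $\mathrm{supp}(W_i)$ (one prime-orientation) or inside $Z_i$ (the other), because $a$ must appear both in the unprimed part of the first shore and in the primed part of the second, and $\mathrm{supp}(W_i)\cap Z_i=\varnothing$. Translating to the blocks $D_1,\dots,D_t$ of the given $(2,1;d)$-CFF over $Y$, the multiplicity of this edge is therefore at most
$$1+\vert D_a\cap D_b\cap D_e\vert+\vert Y\setminus (D_a\cup D_b\cup D_e)\vert ,$$
and the hypothesis $\vert (D_a\cap D_b)\setminus D_c\vert\ge d$ bounds neither of the last two terms.

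The failure is not hypothetical. Let $Y=X_{12}\cup X_{13}\cup X_{23}$ with the $X_{jk}$ pairwise disjoint of size $d$, and $D_1=X_{12}\cup X_{13}$, $D_2=X_{12}\cup X_{23}$, $D_3=X_{13}\cup X_{23}$: this is a valid $(2,1;d)$-CFF (and it extends to one on $t=5$ blocks, so that the $(2,2;d)$ ingredient is non-vacuous as well), yet $D_1\cap D_2\cap D_3=\varnothing$ and $D_1\cup D_2\cup D_3=Y$, so the edge $(\{1,2\},\{1',3'\})$ is covered exactly once, no matter how you arrange the two prime-orientations. Hence your plan --- and equally the paper's proof-by-analogy, since the identical defect already occurs in the proof of Theorem~\ref{new} at the edges $(\{a'\},\{a,b\})$ --- establishes the statement only for $d=1$; for $d\ge 2$ one must either spend more points (for instance $d$ copies of each of $H$ and $K$, which only yields $n_1+2n_2+2d$) or impose a triple-intersection property on the input $(2,1;d)$-CFF that its definition does not provide. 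Identifying the obstacle, as you honestly do, is not the same as overcoming it, and as stated the obstacle cannot be overcome within the construction you propose.
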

\section{Generalization of Cover-Free Families}\label{generalization}
Assume that $G$ is a graph with $t$ vertices and $\delta(G)> w$.
Let ${\cal A}= \{A_1,A_2,\ldots, A_n\}$ be an optimal covering
of $G$. Assume that $v$ is an arbitrary vertex of the graph $G$ and $W$ is a $w$-subset of vertices such that $v \not\in W$. 
Since $\delta(G)> w$, 
there exists a vertex $u$ adjacent to $v$ such that $u \not\in W$. In view of the definition of a covering, there exists $\{i_1, \ldots, i_d \}$ 
such that, for each $k = 1,2,\ldots, d$, $\{u,v\} \subseteq A_{i_k}$ and  $W\cap A_{i_k} = \varnothing$. So we have
 $N((1,w;d),t) \leq N(G,w;d) $.
\begin{lem} \label{low-bound} If $G$ is a graph with $m$ edges, then $$ m \leq T((1,w;d),N(G,w;d)).$$
\end{lem}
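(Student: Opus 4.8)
The plan is to realize an optimal $(w,d)$-covering of $G$ as a $(1,w;d)$-CFF whose points index the covering sets and whose blocks are indexed by the edges of $G$. Exhibiting such a family with $N(G,w;d)$ points and $m$ blocks immediately gives $m\le T((1,w;d),N(G,w;d))$, since $T((1,w;d),n)$ is by definition the largest possible number of blocks on $n$ points.

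Concretely, put $n=N(G,w;d)$ and fix an optimal $(w,d)$-covering ${\cal A}=\{A_1,\dots,A_n\}$ of $G$. Take $[n]$ as the point set, and to each edge $e=\{u,v\}\in E(G)$ assign the block $B_e=\{\, i\in[n] : \{u,v\}\subseteq A_i\,\}$. I will verify that $([n],\{B_e : e\in E(G)\})$ is a $(1,w;d)$-CFF. Because $r=1$, the only thing to check is that for every edge $e$ and every collection $M=\{e_1,\dots,e_w\}$ of $w$ other edges one has $\bigl|B_e\setminus\bigcup_{f\in M}B_f\bigr|\ge d$.

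The heart of the argument is to pass from the forbidden edges in $M$ to a single forbidden vertex set admissible in the covering condition. Since $G$ is simple and each $e_j\ne e$, the edges $e_j$ and $e=\{u,v\}$ share at most one vertex, so each $e_j$ has an endpoint $z_j\notin\{u,v\}$; fix one such $z_j$ for every $j$. The $z_j$ may repeat, so I enlarge $\{z_1,\dots,z_w\}$ to a vertex set $W$ of size exactly $w$ lying entirely outside $\{u,v\}$ (there is room for this under the standing hypothesis $\delta(G)>w$, which forces $|V(G)|\ge w+2$). Applying the $(w,d)$-covering condition to the edge $e$ and this $W$ produces at least $d$ indices $i$ with $\{u,v\}\subseteq A_i$ and $A_i\cap W=\emptyset$. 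Each such $i$ lies in $B_e$, and since $z_j\in W$ we get $z_j\notin A_i$, hence $e_j\not\subseteq A_i$ and $i\notin B_{e_j}$ for all $j$; thus all $d$ of these indices belong to $B_e\setminus\bigcup_{f\in M}B_f$, which is exactly the required inequality.

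This confirms the CFF condition, and since $d\ge1$ the inequality also forces the blocks $B_e$ to be pairwise distinct (were $B_e=B_{e'}$ with $e\ne e'$, choosing $M\ni e'$ would empty the left-hand side), so the family really has $m$ blocks on $n$ points. Hence there is a $(1,w;d)$-CFF with $N(G,w;d)$ points and $m$ blocks, proving $m\le T((1,w;d),N(G,w;d))$. I expect the only genuine difficulty to be the edge-to-vertex translation of the third paragraph; the padding of $W$ and the distinctness of the blocks are routine, the former because enlarging $W$ only tightens the constraint $A_i\cap W=\emptyset$.
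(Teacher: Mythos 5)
Your proof is correct and is essentially the paper's own argument: the paper likewise takes an optimal $(w,d)$-covering $\{A_1,\ldots,A_n\}$ and assigns to each edge $ij\in E(G)$ the block $K_i\cap K_j=\{\,l : \{i,j\}\subseteq A_l\,\}$, which is exactly your $B_e$, asserting without detail that this yields a $(1,w;d)$-CFF with $m$ blocks on $n$ points. Your third paragraph (the edge-to-vertex translation, the padding of $W$, and the distinctness of the blocks) simply supplies the verification that the paper dismisses as ``easy to see,'' using the standing hypothesis $\delta(G)>w$ from the paragraph preceding the lemma.
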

\begin{proof} { Let ${\cal A} = \{A_1,A_2, \ldots, A_n \} $ be an optimal  $(w,d)$-covering of $G$, i.e., $n=N(G,w;d)$. Set
$${\cal B} =\{ K_i \cap K_j \ | \  ij \in E(G) \}, $$
where $K_i$ is the set of keys of the $i^{\rm th}$ user. It is easy to see that $ {\cal B} $ is a $(1,w;d)-CFF$ with $m$ blocks and $n$ elements . Hence, $m \leq T((1,w;d),n)$.}
\end{proof}
An antichain $ \{A_1,A_2,...,A_t\} $ on a set $A$ is a family of nonempty subsets of $A$ such that $A_i \subseteq A_j$ implies that $i=j$. In fact, an antichain with $t$ block on the set $[n]$ is a $(1,1)-CFF(n,t)$. By Sperner's lemma,  if ${\cal A} = \{ A_1,A_2,\ldots,A_t \} $ is an antichain on the set $[n]$, then
$t \leq { n \choose \lfloor \frac{n}{2} \rfloor}$.  Hence, $T((1,1); n) \leq { n \choose \lfloor \frac{n}{2} \rfloor}$ and if ${\cal R}(t)=\min\{ c \,\ | \,\ {c
\choose \lfloor \frac{c}{2}\rfloor}\geq t \}$, then ${\cal R}(t)=N((1,1),t)$. Note that, by Sterling's formula, ${\cal R}(t)= \log_2 t +\frac{1}{2}\log_2\log_2 t + O(1)$. Erd\"{o}s et al.~\cite{erdos1} discussed $(2,1)$-CFFs in detail, and showed that
$$1.134^n \leq T((2,1),n) \leq 1.25^n.$$
The upper bound is asymptotic and for sufficiently large $n$ is useful.
Here is the best known lower bound for $N((1,w),t)$.
\begin{alphthm}\label{rcff}{\rm\cite{rcff1, rcff2, rcff3}} Let $w \geq 2$ and $t \geq w+1$ be  positive
integers. Then
$$N((1,w),t)\geq C_{w,t}\frac{w^2}{\log w}\log t,$$
where $\displaystyle \lim_{w+t\rightarrow \infty}C_{_{w,t}}=c$ for some constant $c$.
\end{alphthm}
In \cite{rcff1, rcff2, rcff3}, it was shown that $c$ is approximately $\frac{1}{2}$,
$\frac{1}{4}$, and  $\frac{1}{8}$, respectively. As a result of this lower bound, we have
$$T((1,w),n) \leq w^{\frac{n}{c w^2}}.$$
So the following corollary is concluded.
\begin{cor}\label{lower} If $G$ is a graph with $m$ edges, then
\begin{enumerate}
\item $\frac{2}{1+ \log_2e}\log_2 m \leq N(G)$,
\item $\frac{1}{\log 1.25}\log m \leq N(G,2;1)$,
\item $ c\frac{w^2}{\log w}\log m \leq N(G,w;1)$, for every $w \geq 2$.
\end{enumerate}
\end{cor}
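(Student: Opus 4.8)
The plan is to obtain all three inequalities from one scheme: feed the edge count $m$ through Lemma~\ref{low-bound} and then invert a known upper bound on $T((1,w),n)$. Setting $d=1$ in Lemma~\ref{low-bound} and abbreviating $n=N(G,w;1)$, we have $m\le T((1,w;1),n)=T((1,w),n)$, so each part reduces to plugging in the best available estimate for $T((1,w),n)$ and solving for $n$. For part (1), where $w=1$, Sperner's lemma gives $T((1,1),n)={n\choose \lfloor n/2\rfloor}$, hence $m\le {N(G)\choose \lfloor N(G)/2\rfloor}$. I would then use ${n\choose \lfloor n/2\rfloor}\le(2e)^{n/2}$ (which holds since ${n\choose \lfloor n/2\rfloor}\le 2^n$ and $2\le e$, or directly from ${n\choose k}\le(en/k)^k$ with $k=\lfloor n/2\rfloor$), so that $m\le(2e)^{N(G)/2}$; taking $\log_2$ gives $\log_2 m\le\frac12(1+\log_2 e)N(G)$, which rearranges to the claimed bound.

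For part (2), where $w=2$, the point is that the sharp estimate $T((2,1),n)\le 1.25^n$ of Erd\"{o}s et al.~\cite{erdos1} transfers to $T((1,2),n)$. Indeed, replacing every block $B_i$ by its complement $B_i^c$ carries a $(1,2)$-CFF to a $(2,1)$-CFF and back: the defining condition $(\bigcap_{l\in L}B_l)\setminus(\bigcup_{m\in M}B_m)\ne\varnothing$ can be written $(\bigcap_{l\in L}B_l)\cap(\bigcap_{m\in M}B_m^c)\ne\varnothing$, which is symmetric under complementing all blocks and interchanging $L$ and $M$. This bijection preserves the number of points and of blocks, so $T((1,2),n)=T((2,1),n)\le 1.25^n$. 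Therefore $m\le 1.25^{N(G,2;1)}$, i.e. $N(G,2;1)\ge \log m/\log 1.25$.

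For part (3), general $w\ge 2$, I would insert the bound $T((1,w),n)\le w^{n/(cw^2)}$ recorded after Theorem~\ref{rcff} into $m\le T((1,w),N(G,w;1))$. Taking logarithms yields $\log m\le \frac{N(G,w;1)}{cw^2}\log w$, which rearranges to $N(G,w;1)\ge c\frac{w^2}{\log w}\log m$; here any fixed logarithm base works since it cancels.

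Once the correct upper bound on $T((1,w),n)$ is available each computation is routine, so I expect no serious obstacle. The one step that is not purely mechanical is in part (2): one must notice that the tight constant $1.25$ comes from the $(2,1)$-estimate and reaches the $(1,2)$ setting only through the complementation duality, rather than from the weaker $w=2$ instance of Theorem~\ref{rcff}. I would also check that the logarithm bases match within each part (base $2$ in part (1); arbitrary but consistent in parts (2) and (3)).
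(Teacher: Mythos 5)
Your proposal is correct and takes essentially the same route the paper intends: the paper gives no explicit proof of this corollary, deriving it directly from Lemma~\ref{low-bound} combined with the Sperner bound on $T((1,1),n)$, the Erd\H{o}s--Frankl--F\"uredi bound $1.25^n$, and the bound $T((1,w),n)\leq w^{n/(cw^2)}$ following Theorem~\ref{rcff}, exactly as you do. Your only genuine addition is making explicit the complementation duality $T((1,2),n)=T((2,1),n)$ needed in part (2), a step the paper leaves implicit (it states the $1.25^n$ bound for $(2,1)$-CFFs while Lemma~\ref{low-bound} produces a $(1,2)$-CFF), so this is a welcome clarification rather than a departure.
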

Let $K_{t_1,t_2}$ be the complete bipartite graph. In the next proposition, we give an upper bound for the $(1,d)$-covering of these graphs.
\begin{pro}\label{low-up bound2} If $t_1$, $t_2$, and $d$ are positive integers, then
$$ N(K_{t_1,t_2},1;d)\leq N((1,1;d),t_1)+N((1,1;d),t_2).$$
\end{pro}
\begin{proof}{Label the vertices of the first and second part of $K_{t_1,t_2}$ with
$v_1,v_2, \ldots, v_{t_1}$ and $u_1,u_2, \ldots, u_{t_2}$, respectively.
Assume that $\{A_1,A_2,\ldots, A_{n_1}\}$ and $\{S_1,S_2,\ldots,
S_{n_2}\}$ are optimal $(1,d)$-covering of the complete graph with $t_1$ vertices and $t_2$ vertices respectively, i.e., 
$n_1=N((1,1;d),t_1)$ and $n_2= N((1,1;d),t_2)$. Define $A'_i=A_i\cup \{u_1,u_2,
\ldots, u_{t_1}\}$ for $i=1,2, \ldots, n_1$ and $S'_i=S_i\cup
\{v_1,v_2, \ldots, v_{t_2}\}$ for $i=1,2, \ldots, n_2$. One can 
check that the collection $\{A'_1, \ldots, A'_{n_1},S'_1, \ldots,
S'_{n_2}\}$ is a covering of the graph $K_{t_1,t_2}$ and so
$$N(K_{t_1,t_2},1;d)\leq N((1,1;d),t_1)+N((1,1;d),t_2),$$
as desired. }
\end{proof}
By the lower bound of Corollary~\ref{lower} and the upper bound of Proposition~\ref{low-up bound2}, for every positive integer $t$,  we have
$$(1.637) \log_2 t\leq N(K_{t,t})\leq 2N((1,1),t)=2{\cal R}(t)=2 \log_2 t +\log_2\log_2 t + O(1).$$
If for a graph $G$, there exists a covering of the edges of the complete graph $K_t$ with $l$ copies of $G$,
then one can see that there exists a $(2,w;d)-CFF(lN(G,w;d),t)$. In \cite{katona}, Katona and Szemer\'{e}di showed that the edges of the complete graph $K_t$ can be covered by the complete bipartite graph $K_{\frac{t}{2}, \frac{t}{2}}$ with a collection of size $\log_2 t$. So we have the following corollary.
\begin{cor} There exists  a $(2,1)-CFF(2{\cal R}(\frac{t}{2})\log_2t,t)$.
\end{cor}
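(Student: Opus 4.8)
The plan is to assemble the corollary from three facts that are all available by this point in the paper: Proposition~\ref{low-up bound2}, the Katona--Szemer\'{e}di covering of $K_t$ by complete bipartite graphs, and the observation stated in the paragraph immediately preceding the corollary, namely that a covering of the edges of $K_t$ by $l$ copies of a graph $G$ yields a $(2,w;d)-CFF(lN(G,w;d),t)$. No new construction is needed; the whole argument is a composition of these ingredients with the parameters chosen to make the point count match.

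First I would specialize Proposition~\ref{low-up bound2} to the balanced complete bipartite graph by setting $t_1=t_2=\frac{t}{2}$ and $d=1$. Using that the notation $(1,1)$-CFF is the $(1,1;1)$-CFF case, so that $N((1,1;1),\cdot)=N((1,1),\cdot)={\cal R}(\cdot)$ by the definition of ${\cal R}$, this yields
$$N(K_{\frac{t}{2},\frac{t}{2}},1;1)\leq N((1,1;1),\tfrac{t}{2})+N((1,1;1),\tfrac{t}{2})=2{\cal R}(\tfrac{t}{2}).$$
Next I would invoke the Katona--Szemer\'{e}di theorem, already cited above, that the edges of $K_t$ can be covered by a collection of $\log_2 t$ copies of $K_{\frac{t}{2},\frac{t}{2}}$. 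Feeding this into the preceding observation with $G=K_{\frac{t}{2},\frac{t}{2}}$, $l=\log_2 t$, and $w=d=1$, I obtain a $(2,1)-CFF$ on $t$ blocks whose number of points is
$$l\cdot N(G,1;1)=\log_2 t\cdot N(K_{\frac{t}{2},\frac{t}{2}},1;1)\leq \log_2 t\cdot 2{\cal R}(\tfrac{t}{2})=2{\cal R}(\tfrac{t}{2})\log_2 t,$$
which is exactly the claimed size.

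Since the argument merely chains together established statements, there is no genuinely hard analytic step; the only points requiring care are matters of bookkeeping. Specifically, I must check that the ``covering of $K_t$ by copies of $G$'' hypothesis of the preceding observation matches the output of the Katona--Szemer\'{e}di construction, which produces an edge \emph{cover} rather than an edge partition, and that the specialization $w=d=1$ is legitimate both in Proposition~\ref{low-up bound2} and in that observation. I would also remark, as is standard in such asymptotic statements, that $t$ is taken so that $\frac{t}{2}$ and $\log_2 t$ are read with the appropriate rounding, which does not affect the stated estimate.
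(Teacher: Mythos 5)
Your proposal is correct and follows essentially the same route as the paper: the corollary there is stated without a separate proof precisely because it is the composition of Proposition~\ref{low-up bound2} (with $t_1=t_2=\tfrac{t}{2}$, $d=1$, giving $N(K_{\frac{t}{2},\frac{t}{2}},1;1)\leq 2{\cal R}(\tfrac{t}{2})$), the Katona--Szemer\'{e}di covering of $K_t$ by $\log_2 t$ copies of $K_{\frac{t}{2},\frac{t}{2}}$, and the observation that a covering of $K_t$ by $l$ copies of $G$ yields a $(2,w;d)$-CFF with $lN(G,w;d)$ points. Your bookkeeping of these three ingredients matches the paper's intended argument exactly.
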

 In the next proposition, we show the relationship between  $N(G)$ and biclique covering number of a special families of bipartite graphs.
\begin{pro} \label{key pool number} Let $G$ be a graph. There exists a bipartite graph $H$ such that 
$$N(G,w;d)=bc_d(H).$$
\end{pro}
\begin{proof}{ Let $H$ be a bipartite graph whose vertices are all edges and all $w$-subsets of vertices of the graph $G$. We say an
edge, $e=\{u,v \}$, is incident to a $w$-subset $W$ if and only if $W \cap \{u, v \}=\varnothing$. We claim that $N(G,w;d)=bc_d(H).$ To see this, first assume that the collection $\{ G_1, \ldots, G_l \}$ is a $d$-biclique cover of the graph $H$, where $l=bc_d(H)$ and $G_i$ has $(X_i, Y_i)$ as its vertex set.  
 Let $A_i$ be the union of the vertices of  edges lying in $X_i$ and $B_i$ be the union of the vertices of  $w$-subsets lying in $X_i$. Set ${\cal A}= \{ A_1, A_2, \ldots, A_l \}$. It is easy to check that ${\cal A}$ is a $(w,d)$-covering of the graph $G$. So $N(G,w;d) \leq bc_d(H)$. Conversely, assume that  ${\cal A}= \{ A_1, A_2, \ldots, A_l \}$ is an optimal $(w,d)$-covering of the graph $G$, i.e., $l=N(G,w;d)$.  
Now, for any $1\leq j \leq l$, construct a bipartite graph $G_j$ with the vertex set $(X_j,Y_j)$,  where the
vertices of $X_j$ are all edges of $G$ that its end points are elements of $A_j$ and the vertices
of $Y_j$ are all $w$-subsets of the set $A_j^c$. Also, an edge is incident to a $w$-subset  if their
intersection is empty. So $G_j$ is a complete bipartite subgraph of $H$.  Let $UV$ be an arbitrary edge of the graph $H$. Hence, there exists an edge  $e=\{ u, v\}$ of the graph $G$ and a $w$-subset $W$ of vertices  disjoint from $\{ u, v\}$ such that $U=\{ u, v\}$ and $V=W$. Since ${\cal A}$ is a $(w,d)$-covering of the graph $G$, there exist $d$ indices $i_1,\ldots,i_d$ such that  $e=\{u, v \} \subseteq A_{i_j}$, and $W \cap  A_{i_j}= \varnothing$,  for $j=1, \dots, d$. Therefore, $G_{i_1}, \ldots G_{i_d}$ covers $UV$ and  $\{G_1,G_2, \ldots, G_l\}$ is a
$d$-biclique cover of $H$. So $bc_d(H) \leq N(G,w;d)$.
}
\end{proof}
Let  $K_{t,t}^-$ be the graph $K_{t,t}$ with a perfect matching removed.  Determining the biclique covering number of  the $K_{t,t}^-$ was  discussed by Bezrukov et al.~\cite{Bezrukov2008319}.  Let $G$ be a graph and $S$ be a subset of the edges of $G$. The graph $G \setminus S$ is obtained from $G$ by removing $S$.
By the proof of Theorem~\ref{key pool number}, for every graph $G$ with $t$ vertices and $m$ edges, we have 
$$N(G)= bc(K_{m,t}\setminus K),$$ 
where $K$ is a bipartite graph in which every vertex in the first part has degree $2$ and
every vertex in the second part has the same degree as the graph $G$. So we have the following corollary.
\begin{cor}\label{cycle}For every integer $n$, $N(C_n)= bc(K_{n,n}\setminus C_{2n})$.
\end{cor}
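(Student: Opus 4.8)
The plan is to specialize the identity $N(G) = bc(K_{m,t}\setminus K)$ recorded immediately before the corollary to the cycle $G = C_n$ and then identify the removed subgraph $K$ explicitly. Since $C_n$ has exactly $n$ vertices and $n$ edges, we have $t = m = n$, so the host graph is $K_{n,n}$. It therefore remains to check that the bipartite subgraph $K$ described in that remark, the one whose first part consists of the $n$ edges of $C_n$, whose second part consists of the $n$ vertices of $C_n$, and in which an edge-node is joined to a vertex-node exactly when the vertex is an endpoint of the edge, is isomorphic to a single cycle $C_{2n}$.

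First I would record the degree conditions. In $K$, every node in the first part (an edge of $C_n$) has degree $2$ because each edge has two endpoints, and every node in the second part has degree equal to its degree in $C_n$, which is $2$ since $C_n$ is $2$-regular. Hence $K$ is a $2$-regular bipartite graph on $2n$ vertices, so it is automatically a disjoint union of even cycles; the only remaining point is that it consists of one cycle rather than several.

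The key step, and the one I expect to carry the real content, is to exhibit $K$ as a single $2n$-cycle. Labeling the vertices of $C_n$ by $1,2,\ldots,n$ in cyclic order and its edges by $e_i = \{i, i+1\}$ (indices modulo $n$), I would trace the alternating walk
$$1,\ e_1,\ 2,\ e_2,\ 3,\ \ldots,\ n,\ e_n,\ 1,$$
using that vertex $i$ is incident in $K$ precisely to $e_{i-1}$ and $e_i$. This walk visits each of the $n$ edge-nodes and each of the $n$ vertex-nodes exactly once before returning to its start, so it is a Hamiltonian cycle of $K$, and consequently $K \cong C_{2n}$. Substituting into $N(C_n) = bc(K_{n,n}\setminus K)$ then yields $N(C_n) = bc(K_{n,n}\setminus C_{2n})$, as claimed. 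The main obstacle is purely the connectivity bookkeeping in this last step, namely verifying that the incidence structure of the single cycle $C_n$ produces one long cycle and not a union of shorter ones; everything else is a direct substitution into an already-established formula.
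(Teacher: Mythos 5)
Your proposal is correct and follows exactly the paper's route: the paper derives the corollary by specializing the identity $N(G)=bc(K_{m,t}\setminus K)$ (obtained from the proof of Proposition~\ref{key pool number} with $w=1$) to $G=C_n$, where the removed graph $K$ is the vertex--edge incidence graph of $C_n$. The only difference is that the paper leaves the identification $K\cong C_{2n}$ implicit, while you verify it explicitly via the $2$-regularity and the alternating Hamiltonian walk, which is a worthwhile detail but not a different method.
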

\begin{lem}\label{hom}  Assume that $G_1, G_2, \ldots, G_k$ are graphs such that for any $i=2, \ldots, k$, the graph $G_1$ contains a subgraph isomorphic to $G_i$. Then
$$N(\cup_{i=1}^k G_i) \leq N(G_1)+k.$$
\end{lem}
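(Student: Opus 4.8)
The plan is to read $\bigcup_{i=1}^{k} G_i$ as the disjoint union of the graphs on vertex sets $V_i=V(G_i)$, and to manufacture a $(1,1)$-covering of this union from an optimal $(1,1)$-covering of $G_1$ together with exactly $k$ auxiliary sets. Recall that for $w=d=1$ a covering $\mathcal{A}=\{A_1,\ldots,A_n\}$ of a graph must, for every edge $\{u,v\}$ and every vertex $x\notin\{u,v\}$, contain a set $A_j$ with $\{u,v\}\subseteq A_j$ and $x\notin A_j$. First I would fix such an optimal covering $\mathcal{A}=\{A_1,\ldots,A_n\}$ of $G_1$ with $n=N(G_1)$. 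By hypothesis, for each $i=2,\ldots,k$ there is an embedding $\phi_i\colon V_i\to V(G_1)$ realizing $G_i$ as a subgraph of $G_1$, that is, an injective map carrying edges of $G_i$ to edges of $G_1$; I set $\phi_1=\mathrm{id}$ for uniformity.

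The construction I propose is to overlay the covering of $G_1$ onto every component simultaneously. For each $j$ put $\tilde{A}_j=\bigcup_{i=1}^{k}\phi_i^{-1}(A_j)$, the union of the pullbacks of $A_j$ through all the embeddings, and for each component put $D_i=V_i$. The candidate family is then $\{\tilde{A}_1,\ldots,\tilde{A}_n,D_1,\ldots,D_k\}$, which has $n+k=N(G_1)+k$ sets, so the bound will follow once this family is shown to be a valid $(1,1)$-covering of the disjoint union. Every edge $\{u,v\}$ of the union lies inside a single component, say $\{u,v\}\subseteq V_i$, and a vertex $x$ to be separated from it is either in the same component or in another one. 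If $x\in V_i$, then $\{\phi_i(u),\phi_i(v)\}$ is an edge of $G_1$ and, by injectivity of $\phi_i$, the point $\phi_i(x)$ is distinct from $\phi_i(u)$ and $\phi_i(v)$; hence $\mathcal{A}$ supplies an $A_j$ with $\phi_i(u),\phi_i(v)\in A_j$ and $\phi_i(x)\notin A_j$, giving $u,v\in\phi_i^{-1}(A_j)\subseteq\tilde{A}_j$ and $x\notin\phi_i^{-1}(A_j)$. If instead $x$ lies in a different component $V_{i'}$, then the single set $D_i=V_i$ already contains $u$ and $v$ but not $x$.

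The hard part will be the same-component case, where I must be sure that fusing the pullbacks from all components into one set $\tilde{A}_j$ does not accidentally put $x$ back into $\tilde{A}_j$. This is exactly where the disjointness of the components is used: since $V_i$ is disjoint from every other $V_{i'}$, one has $\tilde{A}_j\cap V_i=\phi_i^{-1}(A_j)$, so $x\in V_i\setminus\phi_i^{-1}(A_j)$ indeed forces $x\notin\tilde{A}_j$; the only way separation could fail within $V_i$ would be through a collision under $\phi_i$, which injectivity of the embedding excludes. The cross-component case is immediate and is precisely what the $k$ component sets $D_i$ are there to handle, which explains why $k$ extra sets suffice and matches the additive term in the bound. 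Assembling these two cases shows the family is a $(1,1)$-covering, whence $N(\bigcup_{i=1}^{k}G_i)\le N(G_1)+k$.
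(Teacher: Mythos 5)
Your proof is correct and takes essentially the same route as the paper: your pulled-back sets $\tilde{A}_j=\bigcup_{i=1}^{k}\phi_i^{-1}(A_j)$ coincide exactly with the paper's $C_j=A_j\cup B_j$ (built from injective homomorphisms $f_j:V(G_j)\to V(G_1)$), and your component sets $D_i=V(G_i)$ are precisely the extra sets $V(G_1),\ldots,V(G_k)$ the paper appends. The only difference is that you spell out the verification the paper leaves as ``one can check,'' including the useful observation that disjointness of the components guarantees $\tilde{A}_j\cap V_i=\phi_i^{-1}(A_j)$ in the same-component case.
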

\begin{proof}{ Let ${\cal A}=\{A_1, \ldots, A_n\}$ be an optimal covering of the graph $G_1$. For any $j\in \{2,\ldots, k\}$, assume that $f_j:V(G_j) \rightarrow V(G_1)$ is an injective homomorphism, i.e., a one-to-one map which preserves the adjacency. For every $1 \leq i \leq n$, set  $B_i =\{v\in V(G_j) \ | \  j=2,\ldots k,\ f_j(v) \in A_i\}$ and $C_i= A_i \cup B_i$.
One can check that the collection  ${\cal C}=\{ C_1, \ldots, C_n, V(G_1), \ldots, V(G_k) \} $ is a covering for the graph $\cup_{i=1}^k G_i$.
}
\end{proof}
\begin{lem}\label{star covering} 
If $G$ is a star graph with $t+1$ vertices, then $$N(G,w;d)=N((1,w;d),t).$$
\end{lem}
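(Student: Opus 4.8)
The plan is to establish the two inequalities $N(G,w;d)\le N((1,w;d),t)$ and $N((1,w;d),t)\le N(G,w;d)$ separately, in each case transporting one object into the other through the obvious dictionary: points of the CFF correspond to sets of the covering, and the $t$ blocks of the CFF correspond to the $t$ leaves of the star. Write the star as having center $0$ and leaves $\{1,\dots,t\}$, so that its edges are exactly the pairs $\{0,i\}$ with $i\in[t]$, and a $w$-subset $W$ of $V(G)$ disjoint from an edge $\{0,i\}$ is precisely a $w$-subset of $[t]\setminus\{i\}$. This last observation is the reason the star matches the index structure of a $(1,w;d)$-CFF on $[t]$.

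For the inequality $N(G,w;d)\le N((1,w;d),t)$, I would start from an optimal $(1,w;d)$-CFF $\{B_1,\dots,B_t\}$ on a point set $X$ with $|X|=N((1,w;d),t)$ and, for each point $x\in X$, define $A_x=\{0\}\cup\{\,i\in[t] : x\in B_i\,\}$. Then for an edge $\{0,i\}$ and a $w$-subset $W\subseteq[t]\setminus\{i\}$, a set $A_x$ satisfies $\{0,i\}\subseteq A_x$ and $W\cap A_x=\varnothing$ exactly when $x\in B_i$ and $x\notin B_j$ for every $j\in W$; that is, the indices $x$ that work are exactly the points of $B_i\setminus\bigcup_{j\in W}B_j$, of which there are at least $d$ by the CFF property. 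Hence $\{A_x : x\in X\}$ is a $(w,d)$-covering with $|X|$ sets, giving the bound. (The count here is of indices $x$, so coincidences among the $A_x$ are harmless.)

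For the reverse inequality I would start from an optimal $(w,d)$-covering ${\cal A}=\{A_1,\dots,A_n\}$ with $n=N(G,w;d)$, and define blocks on the point set $[n]$ by $B_i=\{\,k\in[n] : i\in A_k\,\}$ for $i\in[t]$. For $L=\{i\}$ and a $w$-subset $M=W\subseteq[t]\setminus\{i\}$ one has $B_i\setminus\bigcup_{j\in W}B_j=\{\,k : i\in A_k,\ W\cap A_k=\varnothing\,\}$, which contains the set $\{\,k : \{0,i\}\subseteq A_k,\ W\cap A_k=\varnothing\,\}$ furnished by the covering; the latter has size at least $d$, so the $(1,w;d)$-CFF condition holds and $N((1,w;d),t)\le n$. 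Combining the two inequalities yields the claimed equality.

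The argument has no serious obstacle; the only point requiring care is the bookkeeping around the center $0$, which lies on every edge of the star yet in none of the forbidden sets $W$ (since $0\notin W$). This is exactly why it is harmless to insert $0$ into every $A_x$ in the first direction and to ignore center-membership when defining $B_i$ in the second. A slicker, less computational route is also available: by Proposition~\ref{key pool number}, $N(G,w;d)=bc_d(H)$, where $H$ is the bipartite graph on the edges and $w$-subsets of $G$; for the star, every $w$-subset containing $0$ is isolated in $H$ and may be discarded, after which $H$ is isomorphic to the bi-intersection graph $I_t(1,w)$, so that $N(G,w;d)=bc_d(I_t(1,w))=N((1,w;d),t)$ by the correspondence of~\cite{Hajiabolhassan20123626}. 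I would likely present the explicit construction as the main proof and mention this identification as a concluding remark.
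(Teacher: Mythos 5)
Your proof is correct and takes essentially the same route as the paper's: both directions transport an optimal object across the point--set duality (each point $x$ of the CFF becomes the set of blocks containing it, with the center adjoined, and conversely the blocks $B_i$ are read off from leaf-membership in the covering sets). The only cosmetic difference is that the paper first argues by minimality that every set of an optimal covering contains the center $v$ and then strips it, whereas you simply ignore center-membership when defining the $B_i$; this is a minor streamlining, not a different argument.
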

\begin{proof}{Let $G$ be a star which $v$ is the interval vertex  and
$v_1,v_2, \ldots, v_t$ are it's leaves. Let ${\cal A}= 
\{A_1,A_2,\ldots, A_n\}$ be a minimum $(w,d)$-covering of $G$. Since
${\cal A}$ is a minimum covering, every $A_i$ contains the vertex
$v$. Consider the collection ${\cal S}=\{ S_i\,\ | \,\ S_i=A_i\setminus \{ v \}, \,\,\,\
i=1,\ldots, n \}$. One can see that  the collection ${\cal S}$ is a $(w,d)$-covering of a complete graph with $t$ vertices. So, $N((1,w;d),t) \leq N(G,w;d)$. 
Conversely, consider an optimal $(1,w;d)-CFF(n,t)$ i.e, $n=N((1,w;d),t)$. Set ${\cal S}= \{S_1,S_2,\ldots, S_n\}$, where $S_i$ is the set of users that have $i^{\rm th}$ key. One can check that 
the collection $\{A_i \, | \, A_i=S_i \cup\{v\}, \ i=1, \ldots,n \}$ is a
$(w,d)$-covering of $G$. Therefore, $N(G,w;d)\leq N((1,w;d),t)$.}
\end{proof}
\begin{thm}\label{CN_tree1}
Let $T$ be a tree with $ m $ edges and maximum degree $ \Delta $.
 Also, assume that $ t $ is the number of vertices of $T$ whose degrees are at least $ 3 $. Then 
$$ N(T)\leq 2\lceil \log_2m\rceil+{\cal R}(\Delta) + t.$$
\end{thm}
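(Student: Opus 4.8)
The plan is to build an explicit covering $\mathcal{A}$ of $T$ and bound its size, using the fact that (for $w=d=1$) a family $\mathcal{A}$ is a covering of $T$ precisely when, for every edge $e=\{u,v\}$ and every vertex $x\notin e$, some member of $\mathcal{A}$ contains $\{u,v\}$ and avoids $x$; equivalently, the sets of $\mathcal{A}$ containing a fixed edge $e$ intersect exactly in $e$. The three summands of the bound will be produced by three devices: a \emph{linear} device of size $2\lceil\log_2 m\rceil$ that keeps the two endpoints of each edge together while separating the edge from distant vertices, a single \emph{shared} Sperner-type family of size $\mathcal{R}(\Delta)$ handling the branching, and one corrective set per branch vertex accounting for the final $+t$.

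First I would settle the path $P_m$, the engine behind $2\lceil\log_2 m\rceil$ (for a path $\Delta=2$, $t=0$, and $\mathcal{R}(2)=2$, so the target reads $2\lceil\log_2 m\rceil+2$). Label the $m+1$ vertices along the path by consecutive reflected binary Gray codes of length $\ell=\lceil\log_2(m+1)\rceil$, and for each coordinate $b$ take the two sets $\{y:\text{bit }b\text{ of }y=1\}$ and $\{y:\text{bit }b\text{ of }y=0\}$. Since consecutive Gray codes differ in exactly one bit, the endpoints of every edge agree in all coordinates but one, so for each of the other coordinates one of its two sets contains both endpoints; and any third vertex, being distinct from both endpoints, must disagree with them in some remaining coordinate, yielding a separating set. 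This gives $N(P_m)\le 2\ell\le 2\lceil\log_2 m\rceil+2$, matching the statement in this degenerate case.

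Next I would extend this to a tree by rooting $T$ and layering the Gray-code idea over a global linearization (a depth-first preorder, along which each subtree is an interval), so that the $2\lceil\log_2 m\rceil$ coordinate-sets still keep each edge together and separate it from all vertices outside the relevant subtree. The residual obstructions are vertices lying in a \emph{sibling} branch of the edge's lower endpoint, the separations a one-dimensional order cannot resolve. To handle these I would invoke the star principle of Lemma~\ref{star covering}: the children hanging at a single vertex form a star and can be distinguished by an antichain of size $\mathcal{R}(\Delta)$ on $[\Delta]$. The crucial point is to reuse one such antichain family across all $t$ branch vertices simultaneously, and then to add a single corrective set per branch vertex to repair the separations the global family leaves ambiguous, producing the $\mathcal{R}(\Delta)+t$ contribution. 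Summing the three devices yields the claimed bound.

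The main obstacle is exactly this combination step: each separating set must simultaneously contain both endpoints of the edge, and this ``togetherness'' constraint fights against the many nearby vertices created at a branch point (an entire sibling subtree rather than a single vertex). The delicate claim is that one shared $\mathcal{R}(\Delta)$-family plus only one extra set per branch vertex suffices, rather than $\mathcal{R}(\Delta)$ fresh sets at every branch vertex. I would verify it by casing on the position of the obstruction $x$ relative to $e=\{u,v\}$ with $v$ the lower endpoint: $x$ not a descendant of $v$ (handled by the linear/Gray-code sets), $x$ a proper descendant of $v$ (handled by the interval structure), and $x$ in a sibling branch (handled by the shared antichain together with the branch-vertex corrective set), checking in each case that the chosen set still contains $\{u,v\}$.
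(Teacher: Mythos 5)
Your path argument is correct: the Gray--code labelling gives a covering of $P_m$ of size $2\lceil\log_2(m+1)\rceil$, which is essentially the same logarithmic device as the paper's recursive bisection of the path (the paper's version yields $2\lceil\log_2 m\rceil$). The tree case, however, contains two genuine gaps, and they sit exactly at the points you deferred. The first is the linear device itself: the property you rely on --- ``the endpoints of every edge agree in all coordinates but one'' --- holds only when the endpoints of every edge receive \emph{consecutive} labels, and no linearization of a tree with a vertex of degree at least $3$ can achieve this, since such a vertex has three neighbours but at most two neighbours consecutive to it in any linear order. For an edge whose endpoints are non-consecutive in the preorder, the two Gray codes may disagree in many bits, so very few (possibly only one) of the $2\lceil\log_2 m\rceil$ coordinate sets contain both endpoints, and the linear device then fails to separate that edge even from obstructions far away in the tree --- not merely from ``sibling branches.'' Hence the residual burden placed on the $\mathcal{R}(\Delta)+t$ sets is far larger than your case analysis assumes. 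The paper avoids this entirely: it deletes the branch vertices, applies the logarithmic device only to the resulting vertex-disjoint paths $T'$ (where it is sound, and where the covering sets, being subsets of $V(T')$, automatically avoid every branch vertex), and charges \emph{every} edge incident to a branch vertex to the star machinery of Lemmas~\ref{hom} and~\ref{star covering}.

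The second gap is the ``delicate claim'' you flag but never establish: one shared antichain family of size $\mathcal{R}(\Delta)$ plus a single corrective set per branch vertex. In its naive implementation this is false as soon as two branch vertices are adjacent (or share a neighbour). Take adjacent branch vertices $v_1\sim v_2$, with $v_1$ further adjacent to $a,b$ and $v_2$ further adjacent to $c,d$, and consider the edge $\{v_1,a\}$ against the obstruction $v_2$. The corrective set at $v_1$ must contain $\{v_1\}\cup N(v_1)$ in order to beat obstructions outside the star, hence contains $v_2$; and any shared antichain set built to contain every branch center also contains $v_2$. So neither device produces a set containing $\{v_1,a\}$ and avoiding $v_2$; repairing this requires an additional idea (for instance, labels chosen consistently across overlapping stars, or a preliminary reduction to vertex-disjoint stars). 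This combination step is exactly what the paper packages as Lemma~\ref{hom} applied to the stars $S_i$ together with Lemma~\ref{star covering} --- note that Lemma~\ref{hom}'s construction itself tacitly assumes the graphs $G_i$ are vertex-disjoint, so the overlapping-star case demands care there too --- but your write-up leaves precisely this verification, together with the broken linearization, as the entire content of the proof.
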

\begin{proof}{ We prove the theorem in two steps. First suppose that $T$ is a path of length $n$. Assume that $k$ is the smallest positive integer such that $2^{k} \geq n$. It is sufficient to prove the proposition for the path $P=P_{2^{k}+1}$. We decompose $P$ into two edge-disjoint paths $P_{1,1} , P_{1,2}$ with the same length. Inductively, for each $ i $ such that $1 \leq i \leq k-1 , 1 \leq j \leq 2^{i}$,
we decompose the path $P_{i,j}$ into two edge-disjoint paths $P_{i+1,2j-1} , P_{i+1,2j}$  with the same length. Set $A_{1}=V(P_{1,1})$ and $B_{1}=V(P_{1,2})$. Also, 
for each $ i $ such that  $2 \leq i \leq k-1 $, define  
$$A_{i}= \bigcup_{t=1}^{2^{i-2}} V(P_{i,4t}) \bigcup_{t=1}^{2^{i-2}} V(P_{i,4t-3})\quad \& \quad B_{i}= \bigcup_{t=1}^{2^{i-2}} V(P_{i,4t-2}) \bigcup_{t=1}^{2^{i-2}} V(P_{i,4t-1}),$$
where $ V(P_{i,j}) $ is the vertex set of $ P_{i,j} $. Now, we show that 
$$ {\cal A}=\lbrace A_{1}, ..., A_{k}, B_{1}, ..., B_{k} \rbrace$$
is a covering of the path $P$. Assume that $uv$ is an arbitrary edge of $P$ and $ w $ is a  vertex of $P$ other than $ u $ and $v$. In view of the definition of $P_{i,j}$'s, there exists a positive integer 
$t$ such that $uv\in E(P_{t,j})$ and $w\not \in V(P_{t,j})$. One can check that either $uv\in A_t$ and $w\not \in A_t$ or $uv\in B_t$ and $w\not \in B_t$, as desired.
Let us now assume that $ T $ is a union of $l$ paths where $l\geq 1 $ and $ {\cal V' } = \{ v_1, v_2, ... ,v_t \} $ is the collection of vertices of $T$ whose degrees are at least $ 3 $. Let  $ T'= T \setminus V' $. One can check that $T'$ is a union of vertex-disjoint paths with at most $ m - 3t $ vertices. According to the first step of the proof, $T'$  has a covering of size $ 2\lceil log_2(m-3t) \rceil $.  Let $ S_i $ be the subgraph induced by the edges of $T$ incident to $v_i$,  for every $ i $ where $ 1\leq i \leq t$. Since $ S_i $, for every  $ 1\leq i \leq t$, is an star, by  Lemmas~\ref{hom} and~\ref{star covering}, the subgraph  induced by $ \bigcup_{1 \leq i \leq t}^{} S_{i} $ can be covered by a collection of size at most  ${\cal R}(\Delta) +t$. This completes the proof of theorem.
}
\end{proof}
By writing out a proof similar to that of Theorem~\ref{CN_tree1} and by Corollary~\ref{cycle}, we obtain  the following result.
\begin{cor} \label{CN of cycle}
If $ C_n $ is  a cycle of lenght $ n $, then 
$$ {\cal R}(n) \leq bc(K_{n,n}\setminus C_{2n})=N(C_n )\leq 2\lceil \log_2n \rceil + 1.$$
\end{cor}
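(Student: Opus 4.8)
The plan is to treat the displayed chain as three separate relations. The central equality $bc(K_{n,n}\setminus C_{2n})=N(C_n)$ is exactly Corollary~\ref{cycle} (the instance $G=C_n$ of the identity $N(G)=bc(K_{m,t}\setminus K)$), so it may be quoted verbatim. It therefore remains to prove the two outer inequalities, and essentially all the work sits in the upper bound.

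For the lower bound ${\cal R}(n)\le N(C_n)$ I would simply invoke the observation recorded just before Lemma~\ref{low-bound}: for any graph $G$ on $t$ vertices with $\delta(G)>w$ one has $N((1,w;d),t)\le N(G,w;d)$. Since $\delta(C_n)=2>1$, taking $w=d=1$ and $G=C_n$ (which has $n$ vertices) gives $N((1,1),n)\le N(C_n)$, and as ${\cal R}(n)=N((1,1),n)$ this is precisely the claim. No further computation is needed here.

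The substance is the upper bound $N(C_n)\le 2\lceil \log_2 n\rceil+1$, which I would obtain by adapting the path construction from the first step of the proof of Theorem~\ref{CN_tree1}. Delete one edge $e_0=\{a,b\}$ of $C_n$ to get a Hamiltonian path $P$ on the full vertex set $V(C_n)$ with $n-1$ edges. Embed $P$ as a prefix of a path on $2^k+1$ vertices with $2^k\ge n-1$, run the dyadic decomposition of that proof to produce the sets $A_1,\dots,A_k,B_1,\dots,B_k$, and intersect each with $V(C_n)$. This yields at most $2\lceil \log_2(n-1)\rceil\le 2\lceil \log_2 n\rceil$ sets which, for every edge of $P$ and every other vertex $w$, contain the edge and exclude $w$. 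To cover the one deleted edge, adjoin the single set $S=\{a,b\}$: it contains $a,b$ and no other vertex, hence separates $e_0$ from every $w\notin\{a,b\}$. The combined family is a $(1,1)$-covering of $C_n$ of size at most $2\lceil \log_2 n\rceil+1$.

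The main point requiring care is the interface between the linear dyadic covering of $P$ and the chord $e_0$. One must verify that restricting each padded-path set to $V(C_n)$ preserves the separating property — it does, since deleting from a set vertices other than $u,v$ keeps $\{u,v\}$ inside and $w$ outside — and that the endpoints $a,b$, which now serve as ordinary forbidden vertices $w$ for the path edges, are correctly separated by the dyadic family, which they are as genuine vertices of $P$. Checking that a single extra set $S$ suffices for $e_0$ is exactly what keeps the additive overhead at $+1$; this is the cyclic analogue of the $+t$ term that handles branch vertices in Theorem~\ref{CN_tree1}.
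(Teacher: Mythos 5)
Your proof is correct and takes essentially the same route as the paper: the paper's own justification is just ``by writing out a proof similar to that of Theorem~\ref{CN_tree1} and by Corollary~\ref{cycle},'' and your argument---quoting Corollary~\ref{cycle} for the equality, using the degree observation preceding Lemma~\ref{low-bound} (with $\delta(C_n)=2>1$ and ${\cal R}(n)=N((1,1),n)$) for the lower bound, and covering the Hamiltonian path $C_n\setminus e_0$ by the dyadic family of Theorem~\ref{CN_tree1} plus the single extra set $\{a,b\}$ for the deleted edge---is precisely that intended adaptation, with the $+1$ accounted for correctly. No gaps; nothing further is needed.
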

The {\it Cartesian product} of two graphs $G$ and $H$ denoted by $G\square H$ is a graph such that the vertices of $G\square H$ is the set $V(G)\times V(H)$ and two vertices $(u,u')$ and $(v,v')$ are adjacent if and only if either $u=v$ and $u'$ is adjacent to $v'$ in the graph $H$ or $u'=v'$ and $u$ is adjacent with $v$ in the graph $G$. 
\begin{cor} \label{CN of lattice}  If $t_1$ and $t_2$ are positive integers, then
$$ N(P_{t_1} \square P_{t_2}) \leq  2\lceil \log_2t_1t_2 \rceil + 2.$$
\end{cor}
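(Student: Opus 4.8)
The plan is to exhibit an explicit covering of $P_{t_1}\square P_{t_2}$ obtained by lifting two one‑dimensional path coverings, one along the rows and one along the columns, and to show that their union already beats the stated count. Write each vertex as a pair $(r,c)$ with $r\in\{1,\dots,t_1\}$ the row and $c\in\{1,\dots,t_2\}$ the column. First I would recall the explicit covering of a path produced in the proof of Theorem~\ref{CN_tree1} (recursive bisection of the index set together with the alternating $ABBA$ sign pattern). Applied to the row index set it yields a family $\{A^{r}_1,B^{r}_1,\dots,A^{r}_{k_1},B^{r}_{k_1}\}$ of at most $2k_1$ subsets of $\{1,\dots,t_1\}$, with $k_1=\lceil\log_2 t_1\rceil$, and I would build the analogous column family with $k_2=\lceil\log_2 t_2\rceil$. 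Two properties of such a family are what I need. The first is the covering property itself: for every consecutive pair $\{r,r+1\}$ and every third index $r''$, some member contains $\{r,r+1\}$ and omits $r''$. The second is single‑index separation: for distinct $r\neq r'$, some member contains $r$ and omits $r'$. This second property holds because, at the level where $r$ and $r'$ first fall into sibling blocks, the $ABBA$ rule always assigns the two sibling blocks $P_{i,2s-1},P_{i,2s}$ complementary labels, so one of $A^r_i,B^r_i$ separates them.

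Next I would lift these families to the grid: each row block $A^r_i$ becomes the vertex set consisting of all of the rows it names (a union of full rows, all columns), and each column block becomes a union of full columns. Call the resulting family $\mathcal F$; it has at most $2k_1+2k_2$ members. The verification that $\mathcal F$ is a $(1,1)$‑covering is a short case analysis on an edge $e$ and an outside vertex $w=(r',c')$. For a horizontal edge $e=\{(r,c),(r,c+1)\}$: if $r'\neq r$ I use the row family to separate the single row $r$ from $r'$, and the lifted set then contains all of row $r$ (hence $e$) while omitting all of row $r'$ (hence $w$); if $r'=r$ then $c'\notin\{c,c+1\}$, and I use the column family's covering property to separate the consecutive pair $\{c,c+1\}$ from $c'$. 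Vertical edges are handled symmetrically, with the roles of the two families exchanged.

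Finally I would turn the count into the stated bound. We have $|\mathcal F|\le 2k_1+2k_2=2(\lceil\log_2 t_1\rceil+\lceil\log_2 t_2\rceil)$, and the elementary inequality $\lceil x\rceil+\lceil y\rceil\le\lceil x+y\rceil+1$ with $x=\log_2 t_1$, $y=\log_2 t_2$ gives $\lceil\log_2 t_1\rceil+\lceil\log_2 t_2\rceil\le\lceil\log_2 t_1t_2\rceil+1$, whence $N(P_{t_1}\square P_{t_2})\le 2\lceil\log_2 t_1t_2\rceil+2$, as desired.

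I expect the main obstacle to be the bookkeeping in the second paragraph, specifically the subcases where $w$ shares a coordinate with an endpoint of $e$ (same row, or same column). These are exactly the cases a single‑dimension covering cannot resolve, since a set built only from column blocks cannot exclude a vertex sharing a column with an endpoint; this is precisely why both liftings are needed and why each one‑dimensional family must deliver \emph{single}-index separation in addition to its defining pair‑versus‑third separation. Confirming that the one $ABBA$ family supplies both — so that no extra sets are required that would inflate the count past $2k_1+2k_2$ — is the crux; the reduction to a power‑of‑two index set (if one prefers to assume $t_i=2^{k_i}$, which does not change $k_i$ and is harmless by subgraph monotonicity of $N$) and the closing arithmetic are routine.
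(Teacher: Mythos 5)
There is a genuine gap, and it sits exactly where you yourself flagged the crux. The ``single-index separation'' property you attribute to the recursive-bisection (ABBA) covering of the path is false at the two ends of the path. Concretely, take the covering $\{A_1,\ldots,A_k,B_1,\ldots,B_k\}$ of $P_{2^k+1}$ built in the proof of Theorem~\ref{CN_tree1}. At every level $i$ the only block containing vertex $1$ is $P_{i,1}$, and $P_{i,1}$ also contains vertex $2$; hence every set of the family that contains $1$ also contains $2$, so no member separates $1$ from $2$ (symmetrically, the last vertex cannot be separated from its unique neighbour). Your sibling-block argument fails here because $1$ and $2$ never lie in distinct sibling blocks \emph{exclusively}: the edge $\{1,2\}$ is itself a block at the finest level, and vertex $2$, being a shared boundary vertex, inherits both labels. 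In general, single separation of $r$ from $r'$ does follow from the covering property---apply it to an edge $\{r,z\}$ with $z\neq r'$---but this needs $r$ to have a neighbour other than $r'$, which is exactly what fails when $r$ is a path endpoint and $r'$ its only neighbour.

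This is fatal for the lifted family as described: for the horizontal edge $e=\{(1,1),(1,2)\}$ and the outside vertex $w=(2,1)$, a row set contains $e$ if and only if it contains row $1$, in which case it also contains row $2$ and hence $w$; a column set would have to contain columns $1$ and $2$ while avoiding column $1$, which is absurd. So no set of $\mathcal{F}$ handles this pair, and $\mathcal{F}$ is not a $(1,1)$-covering of the grid. Patching by adding the boundary rows and columns as extra sets costs $+4$ and overshoots the stated bound. The paper's proof takes a different route precisely to dodge this: it first proves $N(G\square H)\le N(G)+N(H)$ under the hypothesis $\delta(G),\delta(H)\ge 2$ (this minimum-degree condition is exactly what guarantees the single-index separation you need), applies it to the torus $C_{t_1}\square C_{t_2}$ via the cycle bound $N(C_n)\le 2\lceil\log_2 n\rceil+1$ of Corollary~\ref{CN of cycle}, and concludes by subgraph monotonicity, since $P_{t_1}\square P_{t_2}$ is a subgraph of $C_{t_1}\square C_{t_2}$. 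If you want to keep your direct lifting, replace each path covering by the corresponding cycle covering (wrapping around eliminates the endpoints); note, though, that the cycle coverings cost one extra set each, and then the ceiling inequality $\lceil x\rceil+\lceil y\rceil\le\lceil x+y\rceil+1$ that you correctly invoke yields $2\lceil\log_2 t_1t_2\rceil+4$ rather than $+2$---a looseness that is in fact already present, silently, in the paper's own arithmetic.
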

\begin{proof} { First we prove that if $G$ and $H$ are two graphs such that $\delta(G)\geq 2$ and  $\delta(H)\geq 2$, then $ N(G\square H)\leq N(G) + N(H).$ To see this, One can check that if $\{ A_1, A_2, \ldots, A_{n_1} \}$ and $\{ B_1, B_2, \ldots, B_{n_2} \}$  are coverings of $G$ and $H$ respectively, then $\{ A_1 \times V(H), A_2 \times V(H), \ldots, A_{n_1} \times V(H) \} \cup \{ V(G) \times B_1, V(G)\times B_2, \ldots, V(G) \times B_{n_2} \}$ is a covering for the graph $G\square H$. So $N(C_{t_1} \square C_{t_2}) \leq  2\lceil \log_2t_1t_2 \rceil + 2 $. Since $P_{t_1} \square P_{t_2}$ is a subgraph of $C_{t_1} \square C_{t_2}$, the result follows. }
\end{proof}
\begin{alphthm}\label{Lovasz LL}{\rm \bf (Lovasz Local Lemma)} Suppose that $ A_1, ... ,A_l $ are events in a probability space with $Pr[A_i] \leq p  $ for all $ i $. If each event is mutually independent of all the other events except for at most $ d $ of them, and if $ ep(d+1)\leq 1 $, then $ Pr[\bigcap_{i=1}^{l}  \bar A_l]>0 $.
\end{alphthm}
In view of the Lovasz Local Lemma, we give an  upper bound for $ N(G,w;1) $. 
\begin{thm}\label{upper bound1}
If $G$ is a graph with $t$ vertices, and $m$ edges,  then
$$ N(G,w;1)\leq \lceil\frac{\log_2{(e[D+1]) }}{-\log_2 q}\rceil\ ,$$
 where
$$q=1- p^{2}(1-p)^{w}\quad , \quad 0<p<1$$
$$D=m {t-2 \choose  w } - (m -(w+2)\Delta){ t-w-4 \choose w }$$
\end{thm}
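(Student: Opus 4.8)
The plan is to use the probabilistic method together with the Lovász Local Lemma (Theorem~\ref{Lovasz LL}). First I would construct a random family $\mathcal{A}=\{A_1,\dots,A_n\}$ of subsets of $V(G)$ by placing each vertex into each $A_i$ independently with probability $p$, where $n$ is the quantity to be bounded. Recall that $\mathcal{A}$ is a $(w,1)$-covering of $G$ precisely when, for every edge $e=\{u,v\}$ and every $w$-subset $W\subseteq V(G)$ disjoint from $e$, at least one $A_i$ satisfies $\{u,v\}\subseteq A_i$ and $W\cap A_i=\varnothing$. For each such pair $(e,W)$ I would introduce the bad event $B_{e,W}$ that \emph{no} $A_i$ has this property. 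A single set $A_i$ succeeds for $(e,W)$ exactly when $u,v\in A_i$ and $W\cap A_i=\varnothing$, an event of probability $p^{2}(1-p)^{w}$; since the $A_i$ are chosen independently, $\Pr[B_{e,W}]=(1-p^{2}(1-p)^{w})^{n}=q^{n}$.

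Next I would analyze the dependency structure. The event $B_{e,W}$ is determined by the membership indicators of the vertices of its support $S=\{u,v\}\cup W$, a set of $w+2$ vertices, across all $n$ sets. Hence $B_{e,W}$ and $B_{e',W'}$ are mutually independent whenever their supports are disjoint, so the number of events on which $B_{e,W}$ depends is at most the number of pairs $(e',W')$ whose support meets $S$. I would bound this by subtracting, from the total number of pairs $m{t-2\choose w}$, a lower bound on the number of pairs whose support avoids $S$. An edge $e'$ meets $S$ only if it is incident to one of the $w+2$ vertices of $S$, and since $G$ has maximum degree $\Delta$ at most $(w+2)\Delta$ edges are incident to $S$; thus at least $m-(w+2)\Delta$ edges avoid $S$. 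For each such edge, a $w$-subset $W'$ avoiding both $S$ and $e'$ is chosen from the remaining $t-(w+2)-2=t-w-4$ vertices, giving ${t-w-4\choose w}$ choices. Therefore at least $(m-(w+2)\Delta){t-w-4\choose w}$ pairs have support disjoint from $S$, and the number of other events on which $B_{e,W}$ depends is at most $D=m{t-2\choose w}-(m-(w+2)\Delta){t-w-4\choose w}$.

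Finally I would invoke the Lovász Local Lemma with the role of its $p$ played by $q^{n}$ and its $d$ by $D$. The hypothesis $e\,q^{n}(D+1)\le 1$ then guarantees $\Pr\!\big[\bigcap_{e,W}\overline{B_{e,W}}\big]>0$, so some realization of $\mathcal{A}$ avoids every bad event and is a $(w,1)$-covering with $n$ sets. Solving $e\,q^{n}(D+1)\le 1$ for $n$: because $0<q<1$ we have $-\log_2 q>0$, and the inequality is equivalent to $n\ge \log_2(e[D+1])/(-\log_2 q)$. Taking $n=\lceil \log_2(e[D+1])/(-\log_2 q)\rceil$ yields $N(G,w;1)\le n$, as claimed.

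The probabilistic setup is routine; the delicate part, which I expect to be the main obstacle, is the dependency count. One must exclude the edges and the $w$-subsets consistently: the support of a valid pair has exactly $w+2$ vertices, and a pair disjoint from $S$ needs both endpoints of its edge \emph{and} its $w$ further vertices to miss $S$, which is the source of both the $(w+2)\Delta$ edge-incidence estimate and the $t-w-4=t-(w+2)-2$ bookkeeping. Getting these counts to line up with the stated $D$ is the step requiring care.
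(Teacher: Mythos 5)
Your proposal is correct and follows essentially the same route as the paper: the same random construction (independent membership with probability $p$), the same bad events with probability $q^{n}$, the same dependency bound $D$, and the same application of the Lov\'asz Local Lemma followed by solving $e\,q^{n}(D+1)\le 1$ for $n$. In fact, your explicit derivation of the dependency count $D$ fills in a detail the paper only asserts.
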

\begin{proof}{ Let $A =[a_{ij}]$ be an  $N \times t$ random matrix whose entries are mutually independent chosen from $ \{ 0,1 \} $  such that $Pr(a_{ij} =1)=p$, $N$ to be determined. The columns of $A$ are labelled  by the vertices of $G$ and assume that $V(G)=\{ u_1, \cdots, u_t\}$. Assign to the  $i^{\rm th}$ row of $A$, the subset $A_i$ of the vertices of $G$ as follows
$$ A_i=\{u_ j | 1 \leqslant j \leqslant t, a_{ij} = 1 \}.$$
For every edge $U= \lbrace u_{1}, u_{2}\rbrace$ and a subset $ W $ of vertices, where  $W \subseteq V(G)\setminus U $ and $ \vert W \vert = w $, let $A_{(U,W)}$ be the event that there does~not exist a row of $ A $ such that all entries in the columns in $ U $  are $1$  and all entries in the columns in $ W $ are $0$. Note that ${\cal A}=\lbrace A_{1},A_{2},...,A_{N} \rbrace$ is a $(w;1)$-covering of $G$ if and only if none of these events occur, that is, if $Pr(\cap \bar A_{(U,W)})>0$.
One can check that
$$Pr(A_{(U,W)})=q^{N},$$
  where
  $$q=1- p^{2}(1-p)^{w} \quad \& \quad 0<p<1 .$$
  Note that the event $A_{(U,W)}$  is mutually independent of all the other events  $A_{(U',W')}$  except those with
$$( U  \cup  W ) \cap ( U'  \cup  W' ) \neq \phi.$$
 There are at most
$$ D=m { t-2 \choose w } -  (m -(w+2)\Delta){ t-w-4\choose w }$$
such events.
According to the Lovasz Local Lemma, a $(w;1)$-covering of G exists whenever
$$ e(D+1)q^{N} \leq 1 .$$
 Taking logarithms of both sides of this inequality and rearranging, we get the desired
result.}
\end{proof}
{\bf Acknowledgments:} This paper is a part of Mehdi Azadi Motlagh's Ph.D. Thesis. 
The authors would like to express their deepest gratitude to Professor Hossein Hajiabolhassan to introduce 
a generalization of cover-free families and also for his invaluable comments and discussion. 
\def\cprime{$'$}

\end{document}